\renewcommand{\restriction}{|}
\newcommand{\FS}{$\cM$-finite}
\newcommand{\Bi}{\mathfrak{B}}
\newcommand{\Pii}{\Pi}
\DeclareMathOperator{\linspan}{span}
\DeclareMathOperator{\Bl}{Bl}
\DeclareMathOperator*{\colim}{colim}
\title{Push-forward of smooth measures and strong Thom stratifications}
\author{Avraham Aizenbud, Nir Avni, and Shahar Carmeli}
\date{\today}
\begin{document}

\maketitle
\begin{abstract}
    We study the collection of measures obtained via push-forward along a map between smooth varieties over p-adic fields. We investigate when the stalks of this collection are finite-dimensional. We provide an algebro-geometric criterion ensuring this property. This criterion is formulated in terms of a canonical subvariety of the cotangent bundle of the source of the map.  
\end{abstract}

\section{Introduction}

\subsection{Main Result}
Let $\phi\colon X\to Y$ be a morphism of smooth algebraic varieties over a $p$-adic {(i.e., local, non-archimedean, and characteristic 0)} field $F$. 
{Let $\phi_*\cM_c(X(F))$ be the collection of measures on $Y(F)$ obtained from smooth compactly supported measures on $X(F)$ by push-forward.  $\phi_*\cM_c(X(F))$ is a module over the algebra $C_c^\infty(Y(F))$ of locally constant compacty supported functions on $Y(F)$ and  
$$C_c^\infty(Y(F))\cdot  \phi_*\cM_c(X(F))=\phi_*\cM_c(X(F)),$$ so we can c}onsider it as a sheaf\footnote{Beware that, unlike what the notation might suggest, this sheaf is \emph{not} defined as the pushforward of a sheaf on $X$.} on $Y$.

In this paper, we study the following question:
\begin{question}\label{que:main}
    When are the stalks of $\phi_*\cM_c(X{(F)})$ finite-dimensional?
\end{question}

{We partially answer \Cref{que:main} by giving} an algebro-geometric sufficient condition for this finite dimensionality. In order to formulate it, we make the following:

\begin{defn}[the scheme $\Bi_\phi$]\label{def:bi.phi}
For $\phi$ as above, we define the scheme
\[
\mdef{\Bi_\phi} := \Spec_X(\Sym(\Im(D\phi))),
\]
{where $D\phi$ denotes the differential of $\phi$. The} surjection $\cT_X\onto \im(D\phi)$ corresponds to a canonical embedding $\Bi_\phi \into T^*_X$ {that is compatible} with the structure map $\mdef{\Pii_{\Bi_\phi}}:\Bi_\phi\to X$.
\end{defn} 

\begin{introtheorem}[\Cref{thm:main}]\label{thm:intro.main}
Let $\phi\colon X\to Y$ be a morphism of smooth algebraic varieties defined over a $p$-adic field $F$. Assume that for every point $(x,v)\in \Bi_{\phi}$ we have 
\begin{equation}\label{eq:q.trans}   
\dim_{(x,v)}(\phi\circ \Pi_{\Bi_\phi})^{-1}(\phi(x)) \le \dim_x X.
\end{equation}
Then the stalks of $\phi_*\cM_c(X)$ are finite-dimensional. 
\end{introtheorem}

In order to prove this theorem, we show:

\begin{introtheorem}[\Cref{thm:trans_iff_thom}]\label{thm:intro.thom.crit}
Let $\phi\colon X\to Y$ be a morphism of smooth algebraic varieties over {a field of characteristic $0$}. Then $\phi$ satisfies {Inequality \eqref{eq:q.trans} for every $(x,v)\in \Bi_\phi$} if and only if $X$ and $Y$ admit compatible stratifications satisfying the following properties:
\begin{itemize}
    \item For every stratum $S\subseteq X$, the set $\phi(S)$ is a stratum, and the map $S\to \phi(S)$ is smooth.
    \item For every stratum $S\subseteq X$, a point $x\in S$, and a tangent vector $v\in T_xX$ which is tangent to $S$ and to the fiber of $\phi$, we can extend $v$ to a {regular} vector field in a neighborhood of $x$ which is tangent to the strata and to the fibers of $\phi$ at every point.  
\end{itemize}
\end{introtheorem}

\subsection{Background and Motivation}
{
\Cref{que:main} is a part of a
broader theme concerning the relationship between the algebraic–geometric properties of a morphism and the pushforward of smooth measures along this morphism. So far, such questions has been studied from a different perspective: the focus has been on the analytic properties of the pushforward of individual measures, rather than on their collective behavior \cite{AA,GH2,GHS,CH18,rei18}.
}

\Cref{que:main} is motivated by the following:
\begin{example}\label{ex:sha}
    Let $G$ be a reductive algebraic group and $\g$ its Lie algebra.
    Let $\g//G$ be the categorical quotient of $\g$ by the adjoint action of $G$.
    Let $p:\g\to \g//G$ be the quotient map. The theory of Shalika germs (\cite[Theorem 2.1.1]{Sha72}) implies that
    the stalk of $p_*\cM_c(\g(\Q_p))$ at $0$ is finite-dimensional, with an explicit basis, called Shalika germs, indexed by stable nilpotent orbits. 
\end{example}
A similar statement also holds for maps of the type $\phi:X\to X//G$ if $G$ acts with finitely many orbits on each fiber of $\phi$.
In this case, $\fB_\phi$ is contained in the union of the conormal bundles to the orbits of $G$ on $X$, so it easy to check  condition \eqref{eq:q.trans} in this case. Thus, \Cref{thm:intro.main} generalizes \Cref{ex:sha}.

\subsection{Outline of the Proofs}
We call maps that satisfy condition \eqref{eq:q.trans} of \Cref{thm:intro.main} {{quasi-transitive}}. We call stratifications as in \Cref{thm:intro.thom.crit} {{strong Thom stratifications}}.

\subsubsection{\Cref{thm:intro.thom.crit}}
Let $\phi:X\to Y$ be a morphism of smooth varieties (not necessarily quasi-transitive).

In order to show that $\phi$ admits a strong Thom stratification, we introduce a relaxation of the strong Thom condition. 
We call a stratification of $\phi$ {{vertically extendable}} 
if every tangent vector that is simultaneously tangent to both the fibers of $\phi$ the stratum of $X$ extends to a vector field that is tangent to the fibers (but not necessarily to the strata); see \Cref{def:vert.ex.strat} below.

We further introduce a dual notion: a stratification is called {{coarse}} if every vector field that is tangent to the fibers of $\phi$ is also tangent to the strata of $X$; see \Cref{def:coars} below.
By definition, a stratification that is both coarse and vertically extendable satisfies the strong Thom condition. 

We note that a tangent vector to $x\in X$ is perpendicular to $\Pi_{\fB_\phi}^{-1}(x)$ if and only if it can be extended to a vector field that is tangent to the fibers of $\phi$ (see \Cref{prop:fiber_of_B_phi} below). From this, we deduce the following: 

\begin{enumerate}[(i)]
\item A stratification is vertically extendable if and only if
\begin{equation*}\label{eq:intro.inc_2}
\fB_\phi\subset\bigcup_{y\in Y}\bigcup_{i} CN^X_{X_i \cap \phi^{-1}(y)},  
\end{equation*}
where $X_i$ are the strata in $X$, and $CN$ denotes the conormal bundle. See \Cref{lem:criterion_strong_thom_B} below.

\item 
A stratification is coarse if and only if the converse inclusion holds:
\begin{equation*}\label{eq:intro.inc_1}
\fB_\phi\supset\bigcup_{y\in Y}\bigcup_{i} CN^X_{X_i \cap \phi^{-1}(y)}.
\end{equation*}
See \Cref{lem:ver.ext.thom} below.
\end{enumerate}

In \Cref{sec:func.strat} (see \Cref{cor:exist.reg.strat}), we construct a stratification of $\phi$ that satisfies: 
\begin{itemize}
    \item The stratification is coarse.
    \item $\phi$ restricts to a smooth map between corresponding strata.
    \item The dimensions of the fibers of the sheaf $\Im D\phi$ are constant along the strata in $X$. 
\end{itemize}
We construct this stratification by a greedy algorithm.

Now assume in addition that $\phi$ is quasi-transitive.
We use a dimension argument to show that the inclusion in \eqref{eq:intro.inc_1} is an equality. We deduce by \eqref{eq:intro.inc_2} that the stratification is additionally vertically extendable, and hence satisfies the strong Thom condition. 

\subsubsection{\Cref{thm:intro.main}}

The existence of a Thom stratification can be thought of as an infinitesimal version of the existence of a group acting on $X$ (preserving $\phi$) with finitely many orbits in each fiber, where the group action is replaced by a sufficient supply of vector fields. Although these vector fields can be exponentiated locally in the analytic setting, we do not know how to use this fact directly to bound the dimensions of the desired stalks.

The Artin Approximation Theorem allows us replace the analytic flow of vector fields by an algebraic approximation of the formal flow, hence producing a Nisnevich neighborhood over which  
$\phi$ factors as a submersion followed by a map {from a lower dimensional space}.
Using this machinery, we prove a local structure theorem for Thom stratifications:

\begin{introtheorem}[\Cref{thm:structure.Thom}]\label{thm:intro.slice}
Let $\phi \colon X\to Y$ be a strongly Thom-stratified morphism, let $S\subset X$ be a closed stratum, and let $x\in S$. Assume that 
$$\dim(\phi^{-1}(\phi(x))\cap S)> 0.$$ 
Then, there exists a 
Nisnevich neighborhood $\tilde X$ of $x$ such that $\phi|_{\tilde X}$ factors as 
$$\phi|_{\tilde{X}}\colon \tilde X \xrightarrow{\pi} Z \xrightarrow{\tilde \phi} Y$$
such that
\begin{enumerate}
\item\label{thm:intro.slice:1} $\pi$ is a surjective submersion. 
\item\label{thm:intro.slice:2} $\dim(Z)< \dim(X)$.
\item\label{thm:intro.slice:3} $\tilde{\phi}$ is quasi-transitive. 
\end{enumerate}
\end{introtheorem}

\Cref{thm:intro.main} is proved by induction on the dimension of $X$ and the number of strata in a strong Thom stratification of $\phi$ (in lexicographical order).

If there is a stratum $S$ such that $\dim(\phi^{-1}(\phi(x))\cap S)=0$, then we deduce the theorem from the induction assumption applied to $\phi|_{X\smallsetminus S}$. Otherwise, we use \Cref{thm:intro.slice} and deduce the theorem from the induction assumption applied to $\tilde \phi$. 

{
\subsection{Questions}
We end with several questions related to geometric aspects of \Cref{que:main}. In order to formulate them we make the following definition:

\begin{definition} Let $K$ be a finitely generated field of characteristic zero, let $X,Y$ be smooth algebraic varieties over $K$ and let $\phi :X \rightarrow Y$ be a regular morphism. We say that $\phi$ is \tdef{\FS} if for every embedding $K \hookrightarrow F$ into a local field and every $y\in Y(F)$, the stalk $\phi_* \mathcal{M}_c(X(F))_y$ is finite-dimensional.
\end{definition} 

\Cref{thm:intro.main} implies that quasi-transitive morphisms (equivalently, by \Cref{thm:intro.thom.crit}, morphisms that admit strong Thom stratifications) are \FS.
We show in \Cref{exam:4lines} that the converse is not true.
However, our counterexample does satisfy Thom's Condition $A_\phi$ (see \cite[\S 5, Corollary 1]{Hir76}).

This gives rise to the following:
\begin{question}
    Is being \FS\ equivalent to the existence of a stratification that satisfies Thom's Condition $A_\phi$?
\end{question}

In view of \cite{Le76}, we ask
\begin{question}
    Is being \FS\ equivalent to having a theory of nearby cycles (see \cite[page 287]{Sab})?
\end{question}

Another question that one can ask about the \FS ness property is its stability under deformations. 

We say that a property $\mathcal{P}$ of morphisms is stable under smooth deformations if the following holds: For every commutative diagram 
\[
\xymatrix{X \ar[r]^{\varphi} \ar[dr]_{\eta} & Y \ar[d] ^{\psi}\\ & Z}
\]
of smooth varieties for which $\eta,\psi$ are smooth and every $x\in X$ such that the map $\varphi|_{\eta ^{-1} (\eta(x))}$ satisfies $\mathcal{P}$, there is a Zariski open neighborhood $U$ of $x$ such that for every $x'\in U$, the map $\varphi|_{\eta ^{-1} (\eta(x'))\cap U}$ satisfies $\mathcal{P}$.

\begin{question}
    Is the \FS ness property stable under deformations?
\end{question}

\begin{remark}
    Quasi-transitivity is not stable under deformation. See \Cref{rem:def} below.
\end{remark}

Other questions one can ask about the \FS ness and the quasi-transitivity properties concern their behavior under convolutions of morphisms and their relation with the notion of strength. We discuss these questions in \Cref{rem:con} below.
}
\subsection{Acknowledgments}
We thank David Kazhdan for suggesting \Cref{que:main}. 

A.A. was partially supported by ISF grant no. 1781/23, N.A. was partially supported by NSF grant DMS-2503233 and by Simons Foundation Award 1037177. S.C. was partially supported by ISF grant 4093/25, and would like to thank the Azrieli Foundation for their support through an Early Career Faculty Fellowship.
Both A.A. and N.A. were partially supported by BSF grant no. 2022193.
\section{Stratifications}

{By an algebraic variety we always mean a reduced and separated scheme of finite type over a field. For a scheme $X$ we denote the underlying topological space by \tdef{$|X|$}.}
{We introduce several standard notions related to stratifications of algebraic varieties.}

\begin{defn}[Stratification]
Let $X$ be a topological space. A \tdef{stratification} of $X$ is a continuous map $X\to P$ for a finite poset $P$ regarded as a topological space with respect to the order topology, {i.e., the topology with open sets the upwardly-closed ones.}
A \tdef{stratum} of a stratification is the preimage of a point in $P$. We call $P$ the indexing poset of the stratification. A \tdef{stratified space} is a topological space $X$ endowed with a stratification $p\colon X\to P$. When $p$ (and $P$) are clear from the context, we shall omit them from the notation and refer to $X$ as a stratified space. 
\end{defn}

\begin{rem}
Note that in this setup, the closure of each stratum is not necessarily a union of strata, and a stratum might be empty. 
\end{rem}

\begin{defn}
A \tdef{stratified map} of stratified topological spaces $X$ and $Y$ is a commutative square {of continuous maps}
\[
\xymatrix{
X\ar[r]\ar[d] & P\ar[d] \\
Y\ar[r] & Q
}.
\]
\end{defn}

{
\begin{defn}[Stratifications of algebraic varieties]
{A \tdef{stratification of an algebraic variety} $X$ is  a stratification of $|X|$.}
Let $(X,|X|\to P)$ and $(Y,|Y|\to Q)$ be stratified algebraic varieties.
A \tdef{stratified morphism} between $X$ and $Y$ is a morphism that induces a stratified map $|X|\to |Y|$. 

A \tdef{stratification of a morphism} $\phi\colon X\to Y$ is a choice of stratifications of $X$ and $Y$ such that $\phi$ is a stratified morphism with respect to these stratifications.
\end{defn}
}
\begin{defn}[Regular Stratification]
A stratification of an algebraic variety $X$ is called \tdef{regular} if all the strata are smooth algebraic subvarieties of $X$.
\end{defn}

\begin{defn}[Regular and Equistratified Morphisms]
A stratified morphism $\phi \colon X\to Y$ is called: 
\begin{enumerate}
\item  \tdef{regular} if the stratifications of $X$ and $Y$ are regular and for every stratum $S\subseteq X$ mapping to a stratum $T\subseteq Y$, the morphism $\phi|_S\colon S\to T$ is a submersion. 
\item \tdef{equistratified} if the map between the indexing posets is an isomorphism. 
\end{enumerate}
\end{defn}

\begin{rem}
In the case $F=\bC$, all the definitions above have evident analogs in the analytic category that we shall use freely. 
\end{rem}

\section{Quasi-transitive morphisms and strong Thom Stratifications}
In this section we introduce the notion of quasi-transitive morphisms (see \Cref{def:q.trans}) and the strong Thom condition on stratified morphisms (see \Cref{def:str.thom}). We also show that quasi-transitive morphisms admit a strong Thom stratification (see \Cref{thm:trans_iff_thom}).

{Let $F$ be a field.}
Let $\phi:X \rightarrow Y$ be a morphism of smooth varieties over $F$. Consider the differential $D\phi\colon \cT_X \to \phi^*\cT_Y$, where $\cT_X$ is the tangent sheaf for $X$ (whose sections are algebraic vector fields) and similarly for $Y$. We denote the kernel of $D\phi$ by $\cT_\phi$. 

\begin{defn}
A tangent vector $v\in T_x X$ is called \tdef{vertically extendable} if there is a Zariski open neighborhood $U\subset X$ of $x$ and a vector field $\xi \in \cT_\phi(U)$ such that $\xi(x)=v$.
\end{defn} 
{This notion is related to the following subvariety of the contangent bundle of $X$.} 
\begin{defn}[the scheme $\Bi_\phi$]
For $\phi$ as above, we define the scheme
\[
\mdef{\Bi_\phi} := \Spec_X(\Sym(\Im(D\phi))).
\]
{The} surjection $\cT_X\onto \im(D\phi)$ corresponds to a canonical embedding $\Bi_\phi \into {T^*X=\Spec_X(\Sym(\cT_X))}$, compatibly with the structure map $\Pii_{\Bi_\phi}:\Bi_\phi\to X$.
\end{defn} 
{The subscheme $\Bi_\phi\subseteq T^*X$ determines the vertically extendable vectors as follows:}

\begin{proposition} \label{prop:fiber_of_B_phi}
{Let $\phi:X\to Y$ be a morphism of smooth algebraic varieties.}
The fiber of $\Bi_\phi$ over $x\in X$, considered as a subspace of $T^*_x X$, is the annihilator of the space of vertically extendable vectors.
\end{proposition}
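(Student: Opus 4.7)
The plan is to decompose the proof into two essentially independent steps: a linear-algebraic identification of $(\Bi_\phi)_x$ inside $T^*_x X$ as an annihilator, followed by a sheaf-theoretic identification of the annihilated subspace with the space of vertically extendable vectors.

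First, I would unpack the definition $\Bi_\phi = \Spec_X(\Sym(\Im(D\phi)))$. Base-changing along $\Spec k(x) \to X$, the fiber is $\Spec_{k(x)}(\Sym_{k(x)}(\Im(D\phi)_x))$, where $\Im(D\phi)_x := \Im(D\phi) \otimes_{\cO_X} k(x)$. For any finite-dimensional $k(x)$-vector space $W$, $\Spec(\Sym W)$ canonically represents the functor of points $W^*$. Under the embedding $\Bi_\phi \hookrightarrow T^*X$ induced by the sheaf surjection $\cT_X \twoheadrightarrow \Im(D\phi)$, the fiber over $x$ is therefore identified with the image of the injection $(\Im(D\phi)_x)^* \hookrightarrow (T_x X)^* = T^*_x X$ dual to the fiber surjection $T_x X \twoheadrightarrow \Im(D\phi)_x$. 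This image is exactly the annihilator in $T^*_x X$ of the kernel of $T_x X \twoheadrightarrow \Im(D\phi)_x$.

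Second, I would identify that kernel with the space of vertically extendable vectors using the tautological short exact sequence $0 \to \cT_\phi \to \cT_X \to \Im(D\phi) \to 0$. Tensoring with $k(x)$, right-exactness yields an exact sequence $(\cT_\phi)_x \otimes_{\cO_{X,x}} k(x) \to T_x X \to \Im(D\phi)_x \to 0$, so the kernel of the second map is the image of the evaluation $(\cT_\phi)_x \to T_x X$. By the very definition of vertical extendability, and because every germ of a section of the coherent sheaf $\cT_\phi$ at $x$ is represented by an honest section over some Zariski open neighborhood of $x$, this image is precisely the space of vertically extendable vectors.

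I do not expect any serious obstacle. The only care needed is with the conventions for $\Spec$ of $\Sym$ and the direction of duality: one must verify that the canonical embedding $\Bi_\phi \hookrightarrow T^*X$ induced by the sheaf surjection $\cT_X \twoheadrightarrow \Im(D\phi)$ corresponds, fiberwise, to the dual of the fiber surjection $T_x X \twoheadrightarrow \Im(D\phi)_x$. Once these conventions are pinned down, the two steps combine formally, and the argument is entirely local and linear-algebraic.
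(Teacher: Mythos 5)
Your proposal is correct and takes essentially the same approach as the paper's proof: base-change compatibility of $\Sym$ and $\Spec$ identifies the fiber of $\Bi_\phi$ with $(\Im(D\phi)_x)^* \subseteq T^*_xX$, and the short exact sequence $0 \to \cT_\phi \to \cT_X \to \Im(D\phi) \to 0$ together with right-exactness of the fiber functor identifies the annihilated subspace with the vertically extendable vectors. Your explicit remark that a germ of a section of the coherent sheaf $\cT_\phi$ is represented by an honest section over a Zariski open neighborhood is a small clarification the paper leaves implicit, but otherwise the two arguments coincide step for step.
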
 

\begin{proof}
Since the formations of symmetric algebra and relative spectrum {are} compatible with base change, the fiber of $\Bi_\phi$ over $x\in X$ {is}
\begin{equation}\label{eq:B_phi_fiber}
((\im D\phi)|_x)^* \cong \Spec_x(\Sym(\im D\phi|_x)) \subseteq \Spec_x(\Sym(T_x X)) \cong T^*_xX. 
\end{equation}
Consider the exact sequence 
\[
0 \to \cT_\phi \xrightarrow{i} \cT_X \oto{\beta} \im(D\phi) \to 0.
\]
Taking the fiber at $x$ we get an exact sequence 
\[
\cT_\phi|_x \oto{i_x} T_x X \oto{\beta_x} \im(D\phi)|_x \to 0.
\]
Hence, the image of $i_x$ is the kernel $K$ of the map $T_x X \oto{\beta_x} \im(D\phi)|_x$. 
By definition, the image of $i_x$ is exactly the collection of vertically extendable vectors. Therefore, the image of the dual map $\im(D\phi)|_x^* \oto{\beta_x^*} T_x X^*$ is the annihilator of the space of vertically extendable vectors, which by \eqref{eq:B_phi_fiber} is the fiber of $\Bi_\phi$ over $x$.  
\end{proof} 

{Thus, informally, the abundance of vertically extendible vectors is linked to the ``smallness'' of $\Bi_\phi$. This motivates the following definition.}
\begin{defn}[Quasi-transitive morphism]\label{def:q.trans}
Let $\phi \colon X\to Y$ be a morphism of smooth algebraic varieties. 
We say that $\phi$ is \tdef{quasi-transitive} if for every $(x,v)\in \Bi_\phi$ we have 
\[
\dim_{(x,v)}\Big( (\phi \circ \Pii_{\Bi_\phi})^{-1}(\phi(x)) \Big) \leq \dim_x X.
\]
\end{defn}

{
Next, we discuss related notions for \emph{stratifications} of a morphism. Eventually, we will see that quasi-transitivity is equivalent to the existence of a strong Thom stratification as defined below. It will be useful to have also the following relaxed notion:}

\begin{defn}[Vertically Extendable Stratified Morphism]\label{def:vert.ex.strat} 
Let $\phi \colon X\to Y$ be a regularly stratified morphism of smooth algebraic varieties.
 We say that the stratification of $\phi$ is \tdef{vertically extendable} if for every $x\in X$ belonging to a stratum $S$ and every $v\in T_{x}{(}\phi^{-1}(\phi(x)) \cap S{)}$ the vector $v$ is vertically extendable. In this case, we refer to $\phi$ as a vertically extendable (stratified) morphism.  
\end{defn}

\begin{defn}[Strong Thom Stratification]\label{def:str.thom} 
Let $\phi \colon X\to Y$ be a regularly stratified morphism of smooth algebraic varieties.
 We say that the stratification of $\phi$ satisfies the \tdef{strong Thom condition} if for every $x\in X_i$ and every $v\in T_x( \phi^{-1}(\phi(x)) \cap X_i)$ the vector $v$ extends to a vertical vector field $\tilde{v}\in \cT_\phi$ defined in a neighborhood $U$ of $x$ such that, for all $x'\in U$ belonging to a stratum $X_j$, we still have 
\[
\tilde{v}(x') \in T_{x'}(X_j). 
\]
In this situation, we will also refer to $\phi$ as a \tdef{strongly Thom-stratified} morphism. 
\end{defn}

Note that, in particular, every strongly Thom stratified morphism is vertically extendable. 
{Now we use \Cref{prop:fiber_of_B_phi} in order to  characterize the vertically extendable stratifications in terms of the variety $\Bi_\phi$.}

\begin{lemma}\label{lem:criterion_strong_thom_B}
Let $\phi \colon X\to Y$ be a 
{regular} stratified morphism of smooth algebraic varieties.
Then $\phi$ is vertically extendable if and only if for all $y\in Y$, 
\[
{\left|(\phi \circ \Pii_{\Bi_\phi})^{-1}(y) \right| \subseteq \bigcup_{i\in I} \left|\CN^X_{X_i \cap \phi^{-1}(y)}\right|}.  
\]
\end{lemma}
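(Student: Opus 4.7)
The plan is to reduce the claimed set-theoretic inclusion to a pointwise statement inside each cotangent space, and then invoke \Cref{prop:fiber_of_B_phi} to recognize that statement as the very definition of vertical extendability.

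First, I would use \Cref{prop:fiber_of_B_phi} to identify, for every $x\in X$, the fiber $\Pii_{\Bi_\phi}^{-1}(x)\subseteq T_x^*X$ with the annihilator $V_x^\perp$, where $V_x\subseteq T_xX$ denotes the subspace of vertically extendable tangent vectors at $x$. In parallel, if $x$ lies in the stratum $X_i$ with $y=\phi(x)$, then by regularity of the stratified morphism $\phi|_{X_i}$ is a submersion onto its image stratum, so $X_i\cap\phi^{-1}(y)$ is smooth at $x$, and the fiber of $\CN^X_{X_i\cap\phi^{-1}(y)}$ over $x$ equals $T_x(X_i\cap\phi^{-1}(y))^\perp\subseteq T_x^*X$. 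For $y\notin\phi(X_i)$ the intersection $X_i\cap\phi^{-1}(y)$ is empty, so the corresponding term contributes nothing to the right-hand side.

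Next, I would repackage the claimed global inclusion pointwise, using that the strata $\{X_i\}_{i\in I}$ partition $|X|$. Concretely, I would show that the asserted inclusion
\[
|(\phi\circ\Pii_{\Bi_\phi})^{-1}(y)|\subseteq\bigcup_{i\in I}|\CN^X_{X_i\cap\phi^{-1}(y)}|\quad\text{for every }y\in Y
\]
is equivalent to the following pointwise statement: for every $x\in X$, writing $i(x)$ for the index of the unique stratum containing $x$ and $y=\phi(x)$,
\[
\Pii_{\Bi_\phi}^{-1}(x)\subseteq T_x(X_{i(x)}\cap\phi^{-1}(y))^\perp.
\]
The forward direction is immediate from the fiber identifications of the previous paragraph. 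For the converse, note that if $(x,\xi)\in\CN^X_{X_j\cap\phi^{-1}(y)}$, then $x\in X_j$, which forces $j=i(x)$ by disjointness of strata.

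Finally, by linear duality in the finite-dimensional space $T_xX$, and using the identification $\Pii_{\Bi_\phi}^{-1}(x)=V_x^\perp$, the pointwise inclusion above is equivalent to
\[
T_x(X_{i(x)}\cap\phi^{-1}(\phi(x)))\subseteq V_x,
\]
which is exactly the defining condition of vertical extendability of the stratified morphism at $x$ (\Cref{def:vert.ex.strat}). I do not expect any real obstacle: the nontrivial content is packaged into \Cref{prop:fiber_of_B_phi}, and the remaining ingredients are standard linear duality together with the disjointness of strata; the only point demanding attention is checking that $X_i\cap\phi^{-1}(y)$ is smooth at the relevant point so that its conormal fiber takes the expected form, which is guaranteed by regularity of the stratified morphism.
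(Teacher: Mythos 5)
Your proposal is correct and takes essentially the same approach as the paper: both proofs reduce the inclusion to a fiberwise statement in each cotangent space, identify the fiber of $\Bi_\phi$ over $x$ as the annihilator of the vertically extendable vectors via \Cref{prop:fiber_of_B_phi}, identify the fiber of the conormal bundle as the annihilator of $T_x(X_i\cap\phi^{-1}(y))$, and conclude by linear duality. Your version merely spells out the bookkeeping (that only the stratum through $x$ contributes, and that regularity guarantees smoothness of $X_i\cap\phi^{-1}(y)$) which the paper leaves implicit.
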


\begin{proof}
Let $x\in \phi^{-1}(y) \cap X_i$. By \Cref{prop:fiber_of_B_phi}, the fiber of the left-hand side over $x$ is the annihilator of the space of vertically extendable vectors in $T_x X$. On the other hand, the fiber of the right-hand side is the annihilator of $T_x( \phi^{-1}(y) \cap X_i)$. This implies the assertion.
\end{proof}

{As explained above, the strong Thom condition is  stronger than being vertically extendable. To bridge the gap between these notions, we use the following auxiliary one.}    
\begin{defn}\label{def:coars}
A regular stratification of a morphism $\phi \colon X\to Y$ is called \tdef{coarse} if every vector field $v\in \cT_\phi$ defined on an open set $U\subseteq X$ is tangent to the strata of $X$ at any point of $U$. In this case, we say that $\phi$ is \tdef{coarsely stratified}.  
\end{defn}

The next lemma now follows immediately from the definitions:
\begin{lem}\label{lem:ver.ext.thom}
Let $\phi \colon X\to Y$ be a regular stratified morphism. If $\phi$ is coarse and vertically extendable then it is strongly Thom stratified. 
\end{lem}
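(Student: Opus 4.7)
The plan is to unpack the three definitions and observe that the strong Thom condition is literally the conjunction of vertical extendability and coarseness applied to the same extended vector field.

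More precisely, fix a point $x$ lying in a stratum $X_i$ and a tangent vector $v\in T_x(\phi^{-1}(\phi(x))\cap X_i)$, as in \Cref{def:str.thom}. First I would invoke the hypothesis that $\phi$ is vertically extendable: by \Cref{def:vert.ex.strat}, there exists a Zariski open neighborhood $U$ of $x$ and a vertical vector field $\tilde v\in \cT_\phi(U)$ with $\tilde v(x)=v$. This produces the candidate extension required by the strong Thom condition.

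Next I would use the coarseness hypothesis to upgrade this extension. By \Cref{def:coars}, every section of $\cT_\phi$ over any open set is, at every point of that open set, tangent to the stratum passing through that point. Applied to $\tilde v\in \cT_\phi(U)$, this says exactly that for every $x'\in U$ lying in a stratum $X_j$, we have $\tilde v(x')\in T_{x'}(X_j)$, which is precisely the condition appearing in \Cref{def:str.thom}.

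There is no real obstacle here: the argument is a one-line verification that uses only the definitions, as the paper itself remarks. The only thing to be mildly careful about is to make sure that vertical extendability is being applied to vectors tangent to $\phi^{-1}(\phi(x))\cap X_i$ (the same condition appearing in the strong Thom definition), and that coarseness is applied to the particular field $\tilde v$ produced in the previous step; both are immediate.
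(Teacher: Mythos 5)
Your proof is correct and matches what the paper intends: the paper states the lemma follows immediately from the definitions, and your argument is exactly the intended one-step unwinding, first applying vertical extendability to produce $\tilde v\in \cT_\phi(U)$ with $\tilde v(x)=v$, then applying coarseness to that same $\tilde v$ to get tangency to all strata.
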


{The characterization of $\Bi_\phi$ from \Cref{prop:fiber_of_B_phi} gives us the following criterion for a morphism being coarsely stratified:}
\begin{cor}\label{cor:criterion_cours}
Let $\phi \colon X\to Y$ be a 
{regular} stratified morphism of smooth algebraic varieties.
Then $\phi$ is coarsely stratified if and only if for all $y\in Y$, 
{
\begin{equation}    \label{eq:incl}
\left|(\phi \circ \Pii_{\Bi_\phi})^{-1}(y) \right|\supseteq \bigcup_{i\in I} \left|\CN^X_{X_i \cap \phi^{-1}(y)}.\right|  \end{equation}
}
\end{cor}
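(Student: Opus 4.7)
The plan is to compare both sides of \eqref{eq:incl} fiberwise over each $x\in \phi^{-1}(y)$ and translate the inclusion into a concrete statement about tangent vectors at $x$. Fix $y\in Y$ and $x\in \phi^{-1}(y)$, and let $X_j$ be the (unique) stratum containing $x$. By \Cref{prop:fiber_of_B_phi}, the fiber over $x$ of $(\phi\circ \Pii_{\Bi_\phi})^{-1}(y)$ is the annihilator in $T^*_xX$ of the subspace $V_x\subseteq T_xX$ of vertically extendable vectors at $x$. Since strata are pairwise disjoint, only the summand indexed by $j$ on the right-hand side of \eqref{eq:incl} has $x$ in its base, and its fiber at $x$ is, by definition of the conormal bundle, the annihilator of $T_x(X_j\cap \phi^{-1}(y))$. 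Passing to perpendiculars, the pointwise form of \eqref{eq:incl} at $x$ is therefore equivalent to
\[
V_x\subseteq T_x\bigl(X_j\cap \phi^{-1}(y)\bigr).
\]

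Next I would use the regularity of the stratified morphism: $\phi|_{X_j}$ is a submersion onto its image stratum, so $X_j\cap \phi^{-1}(y)=(\phi|_{X_j})^{-1}(y)$ is smooth at $x$ with tangent space $T_xX_j\cap \ker(D\phi_x)$. A vertically extendable vector is, by definition, a value of some section of $\cT_\phi$, hence automatically lies in $\ker(D\phi_x)$. Consequently, the containment above simplifies to $V_x\subseteq T_xX_j$.

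Finally, I would observe that $\phi$ is coarsely stratified (\Cref{def:coars}) precisely when, for every $x\in X$ lying in the stratum $X_j$, every vertically extendable vector at $x$ lies in $T_xX_j$: the ``only if'' direction is immediate since any $v\in V_x$ is the value at $x$ of some section of $\cT_\phi$, which must be tangent to strata by hypothesis; conversely, if this tangency holds pointwise then every $\xi\in \cT_\phi(U)$ is tangent to strata at every point of $U$. Combining the three steps gives the equivalence: \eqref{eq:incl} holds for every $y\in Y$ if and only if $\phi$ is coarsely stratified.

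The proof is essentially a linear-algebraic dualization once \Cref{prop:fiber_of_B_phi} is available, and the only point requiring any care is verifying that $X_j\cap \phi^{-1}(y)$ is smooth at $x$ with the expected tangent space $T_xX_j\cap \ker(D\phi_x)$; this is exactly what the regularity hypothesis on the stratified morphism guarantees, so there is no serious obstacle.
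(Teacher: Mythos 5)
Your argument is correct and follows the same route as the paper: restrict to fibers over a point $x$ in a stratum, apply \Cref{prop:fiber_of_B_phi} to identify the fiber of $\Bi_\phi$ with the annihilator of the vertically extendable vectors, dualize to turn \eqref{eq:incl} into the pointwise statement that vertically extendable vectors are tangent to the stratum, and then check this is exactly coarseness. The paper compresses the dualization step by reference to the proof of \Cref{lem:criterion_strong_thom_B} and states the pointwise condition directly as ``$v$ is tangent to $X_i$'' rather than ``$v$ is tangent to $X_i\cap\phi^{-1}(y)$''; your intermediate step, using regularity of the stratified morphism and the fact that vertically extendable vectors lie in $\ker D\phi_x$, is exactly what justifies that simplification, so you have simply made explicit what the paper leaves implicit.
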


\begin{proof}
As in {the proof of} \Cref{lem:criterion_strong_thom_B}, Condition \eqref{eq:incl} is equivalent to the condition
\begin{equation} \label{eq:incl2}
\text{For $x\in X_i$ and $v\in T_xX$, if $v$ is vertically extendable, then $v$ is tangent to $X_i$.}
\end{equation}

Assume that $\phi$ is coarsely stratified{, let $x\in X$,} and let $v\in T_xX$ be a vertically extendable vector. Let $\tilde v$ be an extension of $v$ to a section of $\cT_\phi$ defined in a neighborhood of $x$. Since $\phi$ is coarsely stratified, $\tilde v$ is tangent to the strata of $X$. Therefore $v$ is tangent to the stratum of $x$ {and Condition \eqref{eq:incl2} holds}.

Conversely, if Condition \eqref{eq:incl2} holds and $\tilde v$ is a section of $\cT_\phi$, then $\tilde v(x)$ is vertically extendable and hence tangent to the stratum of $x$.
\end{proof}

\subsection{Construction of a Functorial Stratification} \label{sec:func.strat}

In this section, for every morphism $\phi \colon X\to Y$ of smooth varieties we construct a \emph{functorial} regular stratification on $\phi$. This construction will have various desirable properties, such as being coarse (see \Cref{cor:exist.reg.strat}). 

We will work in parallel {both in} the settings of algebraic varieties and {in} the setting of complex analytic varieties. Note that the notion of a coarse stratification described above extends to the setting of complex analytic varieties.

\begin{defn} \label{def:M_reg}
Let $\phi \colon X\to Y$ be a morphism of algebraic  varieties (or complex analytic varieties) and let $M$ be a coherent sheaf of $\mathcal{O}_X$-modules. A regular stratification of $\phi$ is said to be \tdef{$M$-regular} if, for every stratum $X_i\subseteq X$, the restriction $M|_{X_i}$ is locally free.
\end{defn}

\begin{thm}\label{thm:nat.strat}
{Let $F$ be a field of characteristic $0$}. There are: 
\begin{enumerate}
\item An assignment $\fS_F$ that assigns to a {pair} $(\phi\colon X\to Y,M)$ as in \Cref{def:M_reg}, defined over $F$, an $M$-regular stratification of $\phi$. 
\item A corresponding assignment $\fS_\an$ that assigns to a triple $(\phi\colon X\to Y,M)$ as above where $X,Y,\phi$ and $M$ are analytic over $\bC$ an $M$-regular stratification of $\phi$.
\end{enumerate}
such that
\begin{enumerate}[(i)]
    \item \label{thm:nat.strat:3}    
    $\Ext_F^{F'} \circ \fS_F=\fS_{F'}  \circ  \Ext_F^{F'}$ where  $\Ext_{F}^{F'}$ denotes  extension of scalars {from $F$ to $F'$}.
    \item \label{thm:nat.strat:ii}$\An\circ \fS_{\bC} = \fS_\an \circ \An$ where $\An$ is the complex analytification functor.
    \item\label{thm:nat.strat:iii} For a {pair} $(\phi\colon X\to Y,M)$ in the analytic category as above and a pair of open embeddings $i_{1,2}:U\to X$ such that $\phi \circ i_1=\phi \circ i_2$ and $i_1^*M \simeq i_2^*M $, 
    we have 
    $$i_1^{-1}(\fS_\an(\phi,M))= i_2^{-1}(\fS_\an(\phi,M)).$$
\end{enumerate}
\end{thm}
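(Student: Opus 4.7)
The plan is to construct $\mathfrak{S}_F$ and $\mathfrak{S}_{\mathrm{an}}$ in parallel via a canonical two-layer greedy algorithm. The first layer stratifies $Y$ intrinsically by iterating the maximal open smooth subvariety operation: set $Y^{(0)} := Y$, $Y_j := (Y^{(j)})_{\mathrm{sm}}$, and $Y^{(j+1)} := Y^{(j)} \setminus Y_j$. Noetherianity ensures termination with $Y = \bigsqcup_j Y_j$, each stratum smooth and locally closed in $Y$. The second layer stratifies each preimage $\phi^{-1}(Y_j)$ by the same greedy recipe, adapted to the map: set $W_{j,-1} := \phi^{-1}(Y_j)$, and iteratively let $X_{j,k} \subseteq W_{j,k-1}$ be the open locus of points $x$ at which (a) $W_{j,k-1}$ is smooth at $x$, (b) the morphism $W_{j,k-1} \to Y_j$ is smooth at $x$, and (c) $M|_{W_{j,k-1}}$ is locally free at $x$; then put $W_{j,k} := W_{j,k-1} \setminus X_{j,k}$. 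The resulting stratification of $X$ is indexed by pairs $(j,k)$, compatible with the indexing of $Y$ by $j$ via $(j,k) \mapsto j$, with each $X$-stratum smooth, mapping submersively into a smooth $Y$-stratum, and carrying $M$ as a locally free sheaf.

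To see that this is a well-defined finite stratification with $X = \bigsqcup_{j,k} X_{j,k}$, I would invoke three standard genericity facts, all valid in characteristic zero: the smooth locus of any algebraic variety is dense open; on this smooth locus, the locally-free locus of any coherent sheaf is dense open; and a morphism between smooth varieties over a characteristic-zero field is generically smooth on the source. Applying these in sequence shows that $X_{j,k}$ is nonempty whenever $W_{j,k-1}$ is, and the recursion terminates by Noetherianity. The $M$-regularity of the stratification and the submersive restriction of $\phi$ to strata are built into the construction. The analytic construction $\mathfrak{S}_{\mathrm{an}}$ is identical, using the analytic notions of smoothness, smoothness of morphisms, and local freeness.

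For the functoriality clauses, the common theme is that all three conditions used to extract each top stratum are local in the Zariski (resp.\ analytic) topology and are preserved under any flat base change. Clause (i) then follows because field extensions are faithfully flat, so the open loci that define the strata commute with extension of scalars; clause (ii) follows because, over a smooth complex variety, the analytic and algebraic smooth loci, the loci where a morphism is smooth, and the loci where a coherent sheaf is locally free all coincide, and analytification commutes with taking reduced subschemes and with forming such open loci. For clause (iii), the $Y$-stratification depends only on $Y$ and so pulls back identically along $\phi \circ i_1 = \phi \circ i_2$; within each $\phi^{-1}(Y_j)$, whether $u\in U$ lies in the pullback of $X_{j,k}$ is determined by the germs of $\phi$ and $M$ at the point in question, and the hypothesis $\phi\circ i_1=\phi\circ i_2$, $i_1^*M \simeq i_2^* M$ ensures that these germs match under both embeddings.

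The main obstacle I expect is careful bookkeeping to ensure the construction is genuinely canonical and germ-local. In particular, verifying clause (iii) requires the locally closed subvarieties $W_{j,k-1}$ appearing at each stage of the recursion to be themselves determined by germ-local data, so that conditions (a)--(c) really depend only on the germs of $\phi$ and $M$; this is straightforward by induction but deserves explicit spelling out. A secondary point is that the characteristic-zero assumption is genuinely used for generic smoothness of morphisms between smooth varieties; without it, the second layer of the greedy algorithm could stall and fail to exhaust $\phi^{-1}(Y_j)$.
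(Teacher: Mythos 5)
Your construction has a genuine gap: the two-layer greedy algorithm gets stuck whenever the morphism is not dominant (or, more generally, whenever some $\phi^{-1}(Y_j)\to Y_j$ fails to be dominant), which is a generic occurrence.

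Concretely, the first layer stratifies $Y$ \emph{intrinsically}. If $Y$ is smooth, this gives $Y_0 = Y$ and nothing else. The second layer then demands, in condition (b), that $W_{j,k-1}\to Y_j$ be a smooth morphism at $x$. But take $\phi\colon \mathbb{A}^1 \to \mathbb{A}^2$, $\phi(t)=(t,0)$, with $M = \mathcal{O}_X$. Then $Y_0 = \mathbb{A}^2$, $W_{0,-1} = \mathbb{A}^1$, and the morphism $\mathbb{A}^1 \to \mathbb{A}^2$ is a closed immersion, hence smooth nowhere. So $X_{0,0}=\varnothing$, $W_{0,0}=W_{0,-1}$, and the recursion stalls without exhausting $X$. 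The genericity fact you invoke to rule this out --- ``a morphism between smooth varieties over a characteristic-zero field is generically smooth on the source'' --- is not correct as stated: generic smoothness in characteristic $0$ asserts smoothness of $f$ over a dense open in the \emph{target}, and when $\phi(X)$ is not dense that open may miss $\phi(X)$ entirely, so its preimage is empty rather than dense in the source. Indeed, a submersion onto a stratum $T\subseteq Y$ forces $\dim S \geq \dim T$, so the $Y$-strata must be cut small enough relative to $\phi$; a $\phi$-independent stratification of $Y$ cannot achieve this.

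The paper's construction avoids exactly this pitfall: the $Y$-stratification is produced \emph{dynamically} along with the $X$-stratification, via a five-case recursion on $(\dim X,\dim Y)$. The crucial case (their Case 3) removes the non-regular values of $\phi$ from $Y$ (this open set is dense by generic smoothness, applied correctly to the target) and recurses on the complement $Y'\subsetneq Y$ with the restricted map, thereby shrinking the target strata until the restriction of $\phi$ becomes a submersion. The sub-stratifications are then combined via explicit ``source gluing'' and ``target gluing'' operations, and the functoriality clauses (i)--(iii) are deduced from a lemma asserting that these two gluings commute with open embeddings of the source. In particular the paper explicitly remarks that the choice of case is not itself functorial, so clause (iii) is \emph{weaker} than general germ-locality; your argument implicitly claims stronger germ-locality, which would need care even in a repaired construction. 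To salvage your approach, the minimal fix is to let the $Y$-stratification refine according to the critical values of $\phi$ (and of its restrictions to lower $X$-strata), i.e., to interleave the two layers rather than running them sequentially --- at which point you have effectively rediscovered the paper's recursion.
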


{For the constructions of $\fS_F$ and $\fS_\an$ we shall need several auxiliary stratifications.}
\begin{definition}
$ $
\begin{enumerate}
    \item Let $X$ be an algebraic or an analytic variety.  We define a stratification $\Reg(X)$ on $X$ in the following recursive way: 
    \begin{itemize}
        \item If $X$ is empty then so is $\Reg(X)$.
        \item Consider the stratification $\Reg(X^{sing})\colon X^{sing}\to P$. Let $Q=\{0\}\sqcup P$ where $0$ is the maximal element, and define $\Reg(X)\colon X\to Q$ by 
        \[
        \Reg(X)(X^{sm}):=\{0\}\quad \text{and} \quad \Reg(X)|_{X^{sing}}:=\Reg(X^{sing}).
        \]
    \end{itemize}
    
    \item Let $(X,p)$ be a stratified algebraic or analytic variety.  We denote by $\Reg(X,p)$ the regular stratification on $X$ obtained in the following way. Let $X_i$ be the strata of $X$. Consider the stratification $\Reg(X_i)\colon X_i\to P_i$. Let $P:=\bigsqcup P_i$ ordered such that for $i>j$ each element of $P_i$ is greater than each element of $P_j$ and the order on each $P_i$ is unchanged. Define $\Reg(X,p)\colon X\to P$ by 
    \[
    \Reg(X,p)|_{X_i}=\Reg(X_i).
    \]
\end{enumerate}
\end{definition}

{We next explain how to glue stratifications of a morphism along an open-closed decomposition of its target.}
\begin{definition}[Target Gluing]
    Let $\phi:X\to Y$ be a morphism of algebraic or analytic varieties and let $|Y|=|Z|\cup |U|$ be a decomposition of $Y$ as a {disjoint union of an open subvariety  $U$ and a closed subvariety $Z$}. Let $\phi_U:\phi^{-1}(U)\to U$ and $\phi_Z:\phi^{-1}(Z)\to Z$ be obtained as restrictions of $\phi$. Assume that
    $$
    \begin{tikzcd}
        \phi^{-1}(U)\rar["s_U"]\dar["\phi_U"] & S_U \dar["\alpha_{\phi_U}"]\\
        U \rar["t_U"] & T_U 
    \end{tikzcd}
    $$
 and     
    $$
    \begin{tikzcd}
        \phi^{-1}(Z)\rar["s_Z"]\dar["\phi_Z"] & S_Z \dar["\alpha_{\phi_Z}"]\\
        Z \rar["t_Z"] & T_Z 
    \end{tikzcd}
    $$
are stratifications of $\phi_U$ and $\phi_Z$.
    
    Define \tdef{$\phi_U*_Y\phi_Z$} to be the stratified map obtained from the following stratification on $\phi$:
    \begin{itemize}
        \item 
We set $T:=T_U\cup T_Z$ and extend the order relations on $T_U$ and $T_Z$ such that each element of $T_U$ is larger than each element of $T_Z$.
        \item We set $S:=S_U\cup S_Z$ and define the order similarly.
        \item We define  $s:X\to S$ by $$s(x)=\begin{cases}
			s_U(x), & \text{if $x\in \phi^{-1}(U)$}\\
			s_Z(x), & \text{if $x\in \phi^{-1}(Z)$}
		 \end{cases}$$
    \item We define  $t:Y\to T$ and $\alpha_\phi:S\to T$ similarly.    
    \end{itemize}
This gives us the {stratified morphism $\phi_{U}\ast_Y \phi_Z$:} 
    $$
    \begin{tikzcd}
        X\rar["s"]\dar["\phi"] & S \dar["\alpha_{\phi}"]\\
        Y \rar["t"] & T 
    \end{tikzcd}
    $$ 
\end{definition}

\begin{definition}[Source Gluing]
    Let $\phi:X\to Y$ be a morphism of algebraic or analytic varieties and let $X= U \cup Z$ be a decomposition of $X$ into an open and a closed set. Let $\phi_U:=\phi|_U:U\to Y$ and $\phi_Z:=\phi|_Z:Z\to Y$. Assume that
    $$
    \begin{tikzcd}
        U\rar["s_U"]\dar["\phi_U"] & S_U \dar["\alpha_{\phi_U}"]\\
        Y \rar["t_U"] & T_U 
    \end{tikzcd}
    $$
 and     
    $$
    \begin{tikzcd}
        Z\rar["s_Z"]\dar["\phi_Z"] & S_Z \dar["\alpha_{\phi_Z}"]\\
        Y \rar["t_Z"] & T_Z 
    \end{tikzcd}
    $$
    are stratifications on $\phi_U$ and $\phi_Z$. 
    Define \tdef{$\phi_U*_X\phi_Z$} to be the stratified map obtained from the following stratification on $\phi$:
    \begin{itemize}
        \item 
We set $T_0:=T_U \times T_Z$ and consider the coordinate-wise order relation on it.
\item Consider the stratification $t\colon Y\to T_0$ given by $t(y)=(t_U(y),t_Z(y))$. 
\item Let $(Y\to T) := \Reg(Y,t)$ be its image under the construction $Reg$.  
\item We set $S^0:=S_U\cup S_Z$ and extend the order relations on $S_U$ and $S_Z$ such that each element of $S_U$ is larger than each element of $S_Z$.
\item We set $S:=S^0 \times T$ and consider the coordinate-wise partial order on it.
\item We define  $s:X\to S$ by $$s(x)=\begin{cases}
			(s_U(x),t(\phi(x))), & \text{if $x\in U$}\\
			(s_Z(x),t(\phi(x))), & \text{if $x\in Z$}
		 \end{cases}$$
         \item We define $\alpha_\phi:S\to T$ to be the projection on the second coordinate.
    \end{itemize}
We define $\phi_U*_X\phi_Z$ to be the stratified map as in the following diagram 
\[
    \begin{tikzcd}
        X\rar["s"]\dar["\phi"] & S \dar["\alpha_{\phi}"]\\
        Y \rar["t"] & T 
    \end{tikzcd}
\]   
\end{definition}

The following lemma is straightforward {and we omit the proof}:
\begin{lem}\label{cor:glue.fanct}
    The procedures of source gluing and target gluing are functorial with respect to open embeddings on the source. Namely,
    let $i:X'\to X$ be an open embedding and $\phi:X\to Y$ be a morphism of (algebraic or analytic) varieties. 
    \begin{enumerate}
        \item Let $Y=Z\cup U$ be a decomposition of $Y$ into an open and a closed set. Let $\phi_U:\phi^{-1}(U)\to U$ and $\phi_Z:\phi^{-1}(Z)\to Z$ be the restrictions of $\phi$. Equip both $\phi_U$ and $\phi_Z$ with stratifications. Define similarly $i_U:i^{-1}(\phi^{-1}(U))\to \phi^{-1}(U)$ and $i_Z:i^{-1}(\phi^{-1}(Z))\to \phi^{-1}(Z)$ and put stratifications on their sources that make $i_U$ and $i_Z$ equistratified. Then $i$ is an equistratified map with respect to the stratifications $\phi_U \circ  i_U*_Y \phi_Z \circ  i_Z$ and $\phi_U *_Y \phi_Z$.        
\item Let $X= U \cup Z$ be a decomposition of $X$ into an open and a closed set, and equip both $\phi|_U$ and $\phi|_Z$ with a stratification.
Consider the stratified map $\phi|_U \ast_X \phi|_Z$. We can pull back the resulting stratification of $X$ along $i$ to get a stratified structure on $i\circ (\phi|_U \ast_X \phi|_Z)$.  On the other hand, we have the stratified map $(\phi|_U \circ i_U) \ast_{X'}(\phi|_Z\circ i_Z)$. Then, 
\[
i\circ (\phi|_U \ast_X \phi|_Z) = (\phi|_U \circ i_U) \ast_{X'}(\phi|_Z\circ i_Z)
\]
as stratified maps $X' \to Y$. 
    \end{enumerate}
\end{lem}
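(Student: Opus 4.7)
The plan is to verify both claims by unpacking the definitions of source gluing and target gluing and then tracking, piece by piece, what pullback along the open embedding $i\colon X'\to X$ does to each datum (source poset, target poset, source stratification map, target stratification map). The elementary input used throughout is that for any subset $W\subseteq Y$ one has $i^{-1}(\phi^{-1}(W))=(\phi\circ i)^{-1}(W)$, so an open--closed decomposition of $Y$ (in Part~1) or of $X$ (in Part~2) pulls back to an open--closed decomposition of the relevant source, and the restriction-of-$\phi$ operation commutes with precomposition by $i$.

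For Part~1 (target gluing), both $\phi_U\ast_Y\phi_Z$ and $(\phi_U\circ i_U)\ast_Y(\phi_Z\circ i_Z)$ share the same target-side data: the poset $T=T_U\cup T_Z$ and the map $t\colon Y\to T$ depend only on the stratifications of $\phi_U$ and $\phi_Z$ over $U$ and $Z$, which are fixed. The source posets $S=S_U\cup S_Z$ are likewise identical, since by the equistratified hypothesis the stratifying posets for the sources of $i_U$ and $i_Z$ are $S_U$ and $S_Z$ themselves. It therefore suffices to compare the source maps: on $i^{-1}(\phi^{-1}(U))=(\phi\circ i)^{-1}(U)$ the composition $s\circ i$ equals $s_U\circ i_U$, which by equistratification is the stratifying map of the restriction, and an analogous identity holds over $Z$. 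Thus $s\circ i$ is exactly the source map of the glued stratification on $X'$.

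For Part~2 (source gluing), the target stratification of $Y$ is constructed in the same way on both sides: the product map $Y\to T_U\times T_Z$ uses only the (unchanged) target stratifications of $\phi|_U$ and $\phi|_Z$, and hence $\Reg(Y,t)$ yields the same $t\colon Y\to T$. With $T$ fixed, the source stratification of $X$ is given piecewise by $(s_U(x),t(\phi(x)))$ on $U$ and by $(s_Z(x),t(\phi(x)))$ on $Z$, taking values in $S^0\times T$. Pulling back along $i$ gives the same formulas evaluated at $i(x')$; on $U'=i^{-1}(U)$ this coincides with the source map of $(\phi|_U\circ i_U)\ast_{X'}(\phi|_Z\circ i_Z)$ by definition of the pullback stratifications on $U'$ and $Z'$, and similarly over $Z'=i^{-1}(Z)$, yielding the claimed equality of stratified maps.

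I do not expect any genuine obstacle: the entire argument is definitional, requiring only careful bookkeeping of how indexing posets are combined in the two gluing operations, together with the single set-theoretic identity $i^{-1}\circ\phi^{-1}=(\phi\circ i)^{-1}$. The most tedious step is keeping the two coordinates of $S=S^0\times T$ in Part~2 straight under restriction, but this is purely notational, which is presumably why the proof is omitted.
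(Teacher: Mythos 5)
Your proof is correct, and since the paper explicitly omits the proof of this lemma as straightforward, your definitional bookkeeping—tracking the posets, the source and target stratification maps, and using $i^{-1}\circ\phi^{-1}=(\phi\circ i)^{-1}$—is exactly the argument the authors have in mind. The only cosmetic point worth flagging is that in Part~1 ``equistratified'' gives an \emph{isomorphism} of posets rather than literal equality, so one should identify $S_{U'}$ with $S_U$ via that isomorphism before concluding $s\circ i=s'$.
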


\begin{proof}[Proof of Theorem \ref{thm:nat.strat}]
We will construct the assignments $\fS_F$ and $\fS_{an}$ in the same way. In what follows we fix one case and denote the corresponding {assignment} simply by $\fS$. 
Our construction is recursive with respect to the partial order on the {pair} $(\dim X, \dim Y)$. 
We proceed by analyzing several cases. The construction in each case may depend on pairs with smaller dimensions or on previous cases.

For any {pair} $(\phi\colon X\to Y,M)$, we will use the construction in the first case that is applicable to this {pair}:
\begin{enumerate}[{Case} 1:]
    \item $\phi$ is a submersion of smooth varieties and $M$ is locally free.\\
    In this case we define $\fS(\phi,M)$ to be the trivial stratification.
    \item $\phi$ is a submersion of smooth varieties.\\
    Let {$U \subseteq X$ be the locus of points over which the fibers of $M$ are of smallest dimension, i.e.,}
    \[U=\{x\in X\mid \dim M|_x=\min_{y\in X_x} \dim M|_y\},
    \]
    where $X_x$ is the component of $X$ that contains $x$. Set $X':=X\smallsetminus U$, $\phi':=\phi|_{X'}$, and $M':=M|_{X'}$. 
    Define 
    $$\fS(\phi,M):=\fS(\phi',M')*_X \fS(\phi|_U,M|_U).$$
    \item $X,Y$ are smooth.\\
    Let $U\subset Y$ be the locus of regular values of $\phi$. 
    Let $Y':=Y\smallsetminus U$. Let $\phi':\phi^{-1}(Y')\to Y'$ and 
    $\phi_U:\phi^{-1}(U)\to U$ be the maps obtained from the restriction of $\phi$.  Let    
    $M':=M|_{\phi^{-1}(Y')}$. 
    {Since $\mathrm{char}(F)=0$, we have $\dim(Y')<\dim(Y)$. Hence, we can apply $\fS$ to the pair $(\phi',M')$.}
    Define $$\fS(\phi,M):=\fS(\phi',M')*_Y \fS(\phi_U,M|_U).$$
    \item $Y$ is smooth.\\
        Let $U\subset X$ be the smooth locus of $X$. Let $X':=X\smallsetminus U$, $\phi':=\phi|_{X'}$, and $M':=M|_{X'}$. 
    Define $$\fS(\phi,M):=\fS(\phi',M')*_X \fS(\phi|_U,M|_U).$$
    \item General case.\\
    Let $U\subset Y$ be the smooth locus of $Y$. 
    Let $Y':=Y\smallsetminus U$, $\phi':\phi^{-1}(Y')\to Y'$, and 
    $\phi_U:\phi^{-1}(U)\to U$ be the restriction of $\phi$. Finally, let   
    $M':=M|_{\phi^{-1}(Y')}$. 
    Define $$\fS(\phi,M):=\fS(\phi',M')*_Y \fS(\phi_U,M|_U).$$
\end{enumerate}
{It remains} to show that conditions (i)–(iii) in the formulation are satisfied. This is done by induction on the {pair} $(\dim X,\dim Y)$, so we only need to check that the conditions are satisfied in each of the cases 1–5 above, provided that they are satisfied for morphisms for which $\fS$ {have been  already constructed}. {Since in each of the cases the stratification $\fS(\phi,M)$ is constructed via source or target gluing of previously constructed stratifications,} this follows  from 
\Cref{cor:glue.fanct}. 
\end{proof}

\begin{remark}
    Note that the functoriality in \Cref{cor:glue.fanct} is stronger than Condition \eqref{thm:nat.strat:iii} in \Cref{thm:nat.strat}. The reason is that while each case of the construction in \Cref{thm:nat.strat} is as functorial as in \Cref{cor:glue.fanct}, the 
    choice of the case is not functorial.  
\end{remark}
From now on we fix assignments $\fS_F$ and $\fS_{an}$ as in \Cref{thm:nat.strat}. 
\subsection{Existence of coarse regular stratification}
{Using the assignment $\fS$, we turn to show that every morphism $\phi\colon X\to Y$ admits a stratification that is simultaneously coarse and regular with respect to the sheaf $\im(D\phi)$.
For quasi-transitive morphisms, such stratifications will be shown later to satisfy the strong Thom condition.}
\begin{prop}
Let $\phi \colon X\to Y$ be a morphism of smooth complex {analytic} varieties. 

Then $\fS_{an}(\phi,\Im(D\phi))$ is coarse.
\end{prop}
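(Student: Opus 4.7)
The plan is to argue by induction on the pair $(\dim X, \dim Y)$ in lexicographic order, following the recursive structure used to build $\fS_{an}(\phi, \Im(D\phi))$. It will be convenient to verify coarseness via the pointwise criterion of \Cref{cor:criterion_cours}, which reduces the condition to checking $\ker(D\phi)_x \subseteq T_x X_i$ at every $x$ in every stratum $X_i$. Since $X$ and $Y$ are smooth, the construction triggers only Case 1, 2, or 3. Whenever $\phi$ is a submersion, $\Im(D\phi) = \phi^*\cT_Y$ is locally free, routing the input to Case 1 and producing the trivial stratification, for which the tangency condition is tautological.

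The substantive case is Case 3, where $\phi$ is not a submersion and $\fS_{an}$ is target-glued along the decomposition $Y = U \sqcup Y'$ with $U$ the regular-value locus. On $\phi^{-1}(U)$, the restricted map is again a submersion with locally free image, so Case 1 applies and its strata are open in $X$; coarseness is immediate there. The real content lies in strata $X_i \subseteq \phi^{-1}(Y')$ produced by $\fS_{an}(\phi', \Im(D\phi)|_{\phi^{-1}(Y')})$. For such strata, any vertical vector field $v \in \cT_\phi(W)$ annihilates $\phi^*(I_{Y'})$, is therefore tangent to $\phi^{-1}(Y')$, and restricts to a derivation $v'$ vertical for $\phi' \colon \phi^{-1}(Y') \to Y'$. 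A direct check gives $\ker(D\phi)_x = \ker(D\phi')_x$ at $x \in \phi^{-1}(Y')$, so the tangency condition for $\phi$ coincides with the analogous one for $\phi'$. Since $\dim Y' < \dim Y$ by Sard in characteristic zero, the pair strictly decreases, inviting the inductive hypothesis.

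The main obstacle is that $\phi^{-1}(Y')$ and $Y'$ are typically singular, while the proposition assumes smooth source and target. To close the induction, I would strengthen the claim to allow arbitrary analytic varieties, with the stratification input being any coherent $\cO_X$-module arising as the restriction of some $\Im(D\Phi)$ from a morphism of smooth ambient varieties. This class is stable under the restriction operations appearing in the recursive construction, so the strengthened hypothesis is available at every recursive step. The inductive step must then be verified through each of Cases 1--5, and the unifying principle is that the analytic flow $\phi^t$ of a vertical vector field preserves every invariant used to cut a sublocus in the construction: it preserves $\phi$ (hence the regular-value locus and the smooth loci of source and target) and the fiber rank of $\Im(D\phi)$ (since $\phi^t$ commutes with $D\phi$). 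Consequently $\phi^t$ preserves strata at every stage of the recursion, which forces $v$ to be tangent to them. The most delicate piece will be Cases 4 and 5, where singular loci are peeled off: one must argue that vertical flows, being biholomorphisms commuting with $\phi$, preserve these singular loci, which follows from the functorial invariance of singular loci under biholomorphism.
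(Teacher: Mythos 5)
Your final paragraph lands on essentially the same idea as the paper's proof, but you arrive there circuitously and end up re-deriving machinery the paper has already packaged. The paper's argument is three lines: integrate a vertical vector field $\xi\in\cT_\phi(V)$ to a local flow $\nu_t$, observe that $\phi\circ\nu_t=\phi$ and $\nu_t^*\Im(D\phi)\simeq\Im(D\phi)$, and then cite property \eqref{thm:nat.strat:iii} of \Cref{thm:nat.strat} to conclude $\nu_t^{-1}(X_i)$ is independent of $t$, hence $\xi$ is tangent to $X_i$. The entire content of your third paragraph --- that the flow of a vertical field is a local biholomorphism commuting with $\phi$, hence preserves regular-value loci, smooth loci, and the fiber ranks of $\Im(D\phi)$, and therefore preserves strata at every stage of the recursion --- is precisely what was proved once and for all in \Cref{thm:nat.strat}\eqref{thm:nat.strat:iii} via \Cref{cor:glue.fanct}. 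You are re-proving that invariance by unwinding the recursion rather than invoking it, which is correct in spirit but misses that the paper constructed $\fS_\an$ with exactly this axiom in mind so that the present proposition becomes immediate.

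Your opening plan (paragraphs one and two) is less convincing and is effectively abandoned midway. Two issues there: first, the pointwise criterion you state, $\ker(D\phi)_x\subseteq T_xX_i$, is not the right reformulation of coarseness --- \Cref{cor:criterion_cours} reduces coarseness to the tangency of \emph{vertically extendable} vectors, i.e., the image of $\cT_\phi|_x\to T_xX$, which is in general a proper subspace of $\ker(D\phi_x)$ when $\cT_\phi$ is not locally free, so proving the stronger pointwise inclusion may not even be possible. Second, your fix for the singularity problem (strengthening to coherent modules ``arising as restrictions of some $\Im(D\Phi)$'') is underspecified: once $X$ is singular, $\cT_X$ and hence $\cT_\phi$ are not controlled by restriction from the smooth ambient variety, and the purported identity $\ker(D\phi)_x=\ker(D\phi')_x$ for $\phi'\colon\phi^{-1}(Y')\to Y'$ requires justification that you do not give. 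The flow argument you pivot to in paragraph three sidesteps all of these issues, which is exactly why the paper adopts it; you would do well to lead with it and simply cite \Cref{thm:nat.strat}\eqref{thm:nat.strat:iii} rather than re-deriving it.
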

\begin{proof} {Let $x\in X$, let $V$ be a neighborhood of $x$, and let $\xi \in \cT_\phi(V)$.} By the existence and uniqueness of solutions of analytic ODE, we have:
\begin{itemize}
    \item a neighborhood $U {\subseteq V}$ of $x$;
    \item a disk $D\subset \C$ around $0$;
    \item a map $\nu:U\times D\to X$ 
\end{itemize}
such that $\frac{d}{dt}\nu(y,t)\big|_{t=0}=\xi(x)$.
Define $\nu_t(y):=\nu(y,t)$. By \Cref{thm:nat.strat}\eqref{thm:nat.strat:iii} for any stratum $X_i\subset X$ the variety $\nu_t^*(X_i)$ does not depend on $t$. This proves the assertion.
\end{proof}

\begin{corollary}\label{cor:exist.reg.strat}
   Let $\phi \colon X\to Y$ be a morphism of algebraic varieties over {a field $F$ of characteristic $0$}. There is a coarse $\Im(D\phi)$-regular 
   stratification of $\phi$.
\end{corollary}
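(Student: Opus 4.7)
The plan is to take the canonical stratification $\fS_F(\phi, \Im(D\phi))$ provided by \Cref{thm:nat.strat}, which is $\Im(D\phi)$-regular by construction, and verify that it is coarse. The key idea is to reduce from the algebraic setting over an arbitrary characteristic-zero field $F$ to the complex analytic setting, where coarseness has already been established in the preceding proposition. This reduction proceeds in two stages.

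First, I would reduce from $F$ to $\bC$. Coarseness is a pointwise linear-algebraic condition on fibers of the tangent sheaf: whether a section $\xi \in \cT_\phi(V)$ satisfies $\xi_x \in T_x X_i$ at a point $x$ of a stratum $X_i$ is a condition that is stable under faithfully flat extension of the residue field at $x$. Hence it suffices to verify coarseness after any extension of scalars $F \hookrightarrow F'$. By property \eqref{thm:nat.strat:3} of \Cref{thm:nat.strat}, the formation of $\fS$ commutes with extension of scalars, so $\fS_{F'}$ applied to the base-changed data is the base change of $\fS_F(\phi,\Im(D\phi))$. Since $X$, $Y$, and $\phi$ are of finite type over $F$, a standard spreading-out argument produces a finitely generated subfield $F_0 \subseteq F$ over which everything is defined, and every such $F_0$ embeds into $\bC$. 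This reduces the problem to $F = \bC$.

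Second, I would cross from the algebraic to the analytic category over $\bC$. By property \eqref{thm:nat.strat:ii} of \Cref{thm:nat.strat}, the analytification of $\fS_{\bC}(\phi, \Im(D\phi))$ coincides with $\fS_{\an}$ applied to the analytification of the pair; in particular the underlying sets of the strata are the same. An algebraic section $\xi \in \cT_\phi(U)$ analytifies to an analytic section of the corresponding relative tangent sheaf over $\An(U)$, and tangency of a tangent vector to a smooth subvariety at a point depends only on the underlying complex-analytic structure. The preceding proposition shows $\fS_{\an}$ is coarse, and hence the algebraic stratification $\fS_{\bC}(\phi, \Im(D\phi))$ inherits coarseness. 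Combined with the descent in the first step, this yields the corollary.

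The principal subtlety is the bookkeeping in these two reductions: verifying that both coarseness itself and the formation of $\fS$ behave well under extension of scalars and under analytification, and identifying the resulting strata on both sides. Once these compatibilities are in place, the corollary follows immediately from the preceding proposition together with \Cref{thm:nat.strat}; no further geometric input is needed.
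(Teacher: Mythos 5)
Your proposal matches the paper's proof in both structure and substance: take $\fS_F(\phi,\Im(D\phi))$, descend to a finitely generated subfield of $F$ via property \eqref{thm:nat.strat:3} of \Cref{thm:nat.strat}, embed that subfield in $\bC$ and extend scalars using the same property, then pass to the analytic category via property \eqref{thm:nat.strat:ii} and invoke the preceding proposition on coarseness of $\fS_{\an}$. The only difference is that you spell out, a bit more explicitly than the paper does, why coarseness (a pointwise tangency condition compatible with flat base change and analytification) transfers correctly across these reductions.
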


\begin{proof}
We take the stratification $\fS_F(\phi,\Im(D\phi))$. Let $k\subset F$ be a $\Q$-finitely generated subfield over which we can define $\phi$. By \Cref{thm:nat.strat}\eqref{thm:nat.strat:3} it is enough to prove the assertion for $F=k$. Embed $k$ into $\C$. So again by \Cref{thm:nat.strat}\eqref{thm:nat.strat:3} it is enough to prove the assertion for $F=\C$. The assertion follows now from the previous corollary using \Cref{thm:nat.strat}\eqref{thm:nat.strat:ii}.
\end{proof}

\subsection{Quasi-transitive morphisms admit strong Thom Stratifications}
We now turn to the main goal of this section. {Namely, we show that quasi-transitive morphisms admit strong Thom stratificaitons}. We start with the following: 

\begin{prop} \label{prop:quasi_trans_is_thom}
Let $\phi\colon X\to Y$ be a quasi-transitive morphism. Every stratification of $\phi$ which is coarse and $\Im(D\phi)$-regular is vertically extendable. 
\end{prop}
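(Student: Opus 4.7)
The plan is to reduce vertical extendability to a pointwise equality of linear subspaces of $T^*_xX$ using the characterizations already proved, and then close the gap by a short dimension count. By \Cref{lem:criterion_strong_thom_B} and \Cref{cor:criterion_cours}, the stratification is coarse exactly when
\[
(\phi\circ\Pii_{\Bi_\phi})^{-1}(y)\;\supseteq\; \bigcup_i \CN^X_{X_i\cap\phi^{-1}(y)}
\]
for every $y\in Y$, and vertically extendable exactly when the reverse inclusion holds. Coarseness is given, so I only need the reverse inclusion. After restricting to a point $x\in X_i$ with $y=\phi(x)$, the containment $\CN^X_{X_i\cap\phi^{-1}(y)}|_x\subseteq \Bi_\phi|_x$ in $T^*_xX$ is supplied by coarseness, and it will suffice to show that these two linear subspaces have the same dimension.

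Let $X_i\subseteq X$ and $T_j\subseteq Y$ be the strata with $x\in X_i$ and $\phi(X_i)\subseteq T_j$. I will assemble three dimension computations. First, by $\Im(D\phi)$-regularity, $\Bi_\phi|_{X_i}$ is a vector bundle of some rank $r_i$ over $X_i$, so $\dim\Bi_\phi|_x=r_i$. Second, regularity of the stratification makes $\phi|_{X_i}\colon X_i\to T_j$ a submersion, so $X_i\cap\phi^{-1}(y)$ is smooth of dimension $\dim X_i-\dim T_j$ at $x$, yielding $\dim\CN^X_{X_i\cap\phi^{-1}(y)}|_x=\dim X-\dim X_i+\dim T_j$. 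Third, $\Bi_\phi|_{X_i\cap\phi^{-1}(y)}$ is the restriction of this rank-$r_i$ bundle to the smooth fiber, so it is a locally closed subscheme of $(\phi\circ\Pii_{\Bi_\phi})^{-1}(y)$ through $(x,v)$ of dimension $r_i+\dim X_i-\dim T_j$; hence
\[
\dim_{(x,v)}(\phi\circ\Pii_{\Bi_\phi})^{-1}(y)\;\geq\; r_i+\dim X_i-\dim T_j.
\]

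The endgame is to combine this lower bound with the two hypotheses. Quasi-transitivity supplies the upper bound $\dim_{(x,v)}(\phi\circ\Pii_{\Bi_\phi})^{-1}(y)\leq \dim_x X$, which rearranges to $r_i\leq \dim X-\dim X_i+\dim T_j=\dim\CN^X_{X_i\cap\phi^{-1}(y)}|_x$. Coarseness provides the opposite inequality $\dim\CN^X_{X_i\cap\phi^{-1}(y)}|_x\leq \dim\Bi_\phi|_x=r_i$ via the containment, so the dimensions match and the inclusion $\CN^X_{X_i\cap\phi^{-1}(y)}|_x\subseteq \Bi_\phi|_x$ of linear subspaces of $T^*_xX$ must be an equality. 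In particular $v\in\CN^X_{X_i\cap\phi^{-1}(y)}|_x$, proving the required pointwise inclusion and hence vertical extendability. The step I expect to need most care is the dimension inequality in the third computation, where one must check that the lower bound for the local dimension of $(\phi\circ\Pii_{\Bi_\phi})^{-1}(y)$ at $(x,v)$ is legitimate even though $X_i$ is only locally closed; but this is immediate since $\Bi_\phi|_{X_i\cap\phi^{-1}(y)}$ itself is a locally closed subscheme of $(\phi\circ\Pii_{\Bi_\phi})^{-1}(y)$ containing $(x,v)$ of the stated dimension.
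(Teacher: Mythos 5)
Your proof is correct and follows essentially the same route as the paper: reduce via \Cref{lem:criterion_strong_thom_B} and \Cref{cor:criterion_cours} to upgrading the coarseness inclusion to an equality, then close the gap with a dimension count that uses $\Im(D\phi)$-regularity for the vector-bundle structure of $\Bi_\phi$ over strata and quasi-transitivity for the upper bound $\dim_{(x,v)}(\phi\circ\Pii_{\Bi_\phi})^{-1}(y)\le\dim_x X$. The only cosmetic difference is that you run the dimension comparison pointwise in $T^*_xX$ (showing $\CN^X_{X_i\cap\phi^{-1}(y)}\big|_x$ and $\Bi_\phi\big|_x$ are linear subspaces of equal dimension, one inside the other), whereas the paper compares ranks of the two vector bundles over $X_j\cap\phi^{-1}(y)$ and concludes equality of bundles; these are the same argument.
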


\begin{proof}
Let $\{X_i\}_{i\in I}$ be the strata in $X$. Since the stratification is coarse, every vector field in $\cT_\phi$ is tangent to $X_i$ at every point of it. 

By \Cref{cor:criterion_cours}, it follows that 
{
$$\left|(\phi \circ \Pii_{\Bi_\phi})^{-1}(y) \right|\supseteq \bigcup_{i\in I} \left|\CN^X_{X_i \cap \phi^{-1}(y)}\right|$$
}
for all $y\in Y$.
By \Cref{lem:criterion_strong_thom_B} it suffices to show that this inclusion is an equality. 

Let $\Pi\colon T^*X \to X$ be the projection. 
Since {$|X| = \bigcup_i |X_i|$}, it suffices to show that, for every $j$,  
{
\begin{equation} \label{eq:trans_iff_thom_1}
\left|(\phi\circ \Pii_{\Bi_\phi})^{-1}(y) \right|\cap \left| \Pi^{-1}(X_j)\right| = \bigcup_{i\in I} \left|\CN^X_{X_i \cap \phi^{-1}(y)}\right| \cap \left|\Pi^{-1}(X_j)\right|.
\end{equation}
}
Note that the right-hand side equals {$\left|\CN^X_{X_j \cap \phi^{-1}(y)}\right|$, which is the underlying space of} a vector bundle over $X_j \cap \phi ^{-1} (y)$. Since the stratification is $\Im(D\phi)$-regular, the left-hand side is also {the underlying space of }a vector bundle over $X_j \cap \phi^{-1}(y)$. 
By the quasi-transitivity assumption, for every $(x,v)\in \CN^X_{X_j \cap \phi^{-1}(y)}$
we have that
$$
    \dim_{(x,v)} \big((\phi\circ \Pii_{\Bi_\phi})^{-1}(y) \cap \Pi^{-1}(X_j)\big) \le \dim_{(x,v)} \big((\phi\circ \Pii_{\Bi_\phi})^{-1}(y)\big)=\dim_x(X)=\dim_{(x,v)}\big(\CN^X_{X_j \cap \phi^{-1}(y)}\big).
$$

Thus, both sides in \eqref{eq:trans_iff_thom_1} 
are {the underlying spaces of }vector bundles over $X_j\cap \phi^{-1}(y)$ of the same rank. Also, 
the RHS is contained in the LHS. Therefore
they are equal, as required.
\end{proof}

\begin{thm}\label{thm:trans_iff_thom}
Let $\phi\colon X\to Y$ be a morphism of smooth algebraic varieties {over a field of characteristic $0$}. Then, the following are equivalent:
\begin{enumerate}
    \item $\phi$ is quasi-transitive; 
    \item $\phi$ admits a strong Thom stratification. 
\end{enumerate}
\end{thm}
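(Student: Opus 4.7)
The plan is to prove the two implications separately, leveraging the characterizations of vertical extendability and coarseness in terms of $\Bi_\phi$ that were developed earlier in the section.

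For the implication $(1) \Rightarrow (2)$, my strategy is to assemble preceding results. Given a quasi-transitive $\phi$, I will apply \Cref{cor:exist.reg.strat} to produce a coarse, $\Im(D\phi)$-regular stratification of $\phi$ (this is where the characteristic $0$ hypothesis enters). \Cref{prop:quasi_trans_is_thom} will then show that, under the quasi-transitivity hypothesis, this stratification is vertically extendable. Finally, \Cref{lem:ver.ext.thom}, which states that coarse and vertically extendable together imply the strong Thom condition, will upgrade it to a strong Thom stratification.

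For the reverse implication $(2) \Rightarrow (1)$, the key observation is that every strong Thom stratification is in particular vertically extendable (as noted after \Cref{def:str.thom}), so by \Cref{lem:criterion_strong_thom_B} we have, for every $y \in Y$,
\[
\left|(\phi \circ \Pii_{\Bi_\phi})^{-1}(y)\right| \subseteq \bigcup_{i \in I} \left|\CN^X_{X_i \cap \phi^{-1}(y)}\right|.
\]
Given $(x,v) \in \Bi_\phi$ with $y := \phi(x)$, the regularity of the stratified morphism together with the submersion property on strata guarantees that every nonempty $X_i \cap \phi^{-1}(y)$ is a smooth subvariety of $X$. Consequently, each $\CN^X_{X_i \cap \phi^{-1}(y)}$ is a vector bundle over its smooth base, and its total space has local dimension
\[
\dim_x(X_i \cap \phi^{-1}(y)) + \bigl(\dim_x X - \dim_x(X_i \cap \phi^{-1}(y))\bigr) = \dim_x X
\]
at each of its points lying over $x$. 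Since $(x,v)$ belongs to the left-hand side and hence to the union on the right, any irreducible component of the fiber $(\phi \circ \Pii_{\Bi_\phi})^{-1}(y)$ passing through $(x,v)$ must lie in a single such conormal bundle, and therefore has dimension at most $\dim_x X$. Taking the maximum over such components yields the quasi-transitivity inequality.

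I do not expect any serious obstacle. The only substantive step is the dimension bookkeeping in $(2) \Rightarrow (1)$, and this reduces to the elementary fact that the total space of a conormal bundle to a smooth subvariety $Z \subseteq X$ has local dimension $\dim_z X$. Beyond this, the theorem is a formal consequence of the machinery built up in the preceding pages---the functorial stratification of \Cref{thm:nat.strat}, its coarseness derived via ODE extension of vertical vector fields, and the dimension-counting equality of \Cref{prop:quasi_trans_is_thom}.
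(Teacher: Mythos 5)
Your proof takes essentially the same route as the paper: the $(1)\Rightarrow(2)$ direction is exactly the paper's chain \Cref{cor:exist.reg.strat} $\to$ \Cref{prop:quasi_trans_is_thom} $\to$ \Cref{lem:ver.ext.thom}, and the $(2)\Rightarrow(1)$ direction is the same use of \Cref{lem:criterion_strong_thom_B} with the dimension count on conormal bundles, which the paper compresses to ``follows immediately.'' One small imprecision in your write-up: an irreducible component of the fiber contained in a finite union of locally closed conormal bundles need only lie in the \emph{closure} of one of them, not in the conormal bundle itself; since the closure has the same dimension, the bound $\dim_x X$ still holds and the argument goes through.
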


\begin{proof} $ $
\begin{itemize}
    \item[(2)$\Rightarrow$(1)] 
    Follows immediately from \Cref{lem:criterion_strong_thom_B}.
    \item[(1)$\Rightarrow$(2)] 
    By \Cref{cor:exist.reg.strat}, $\phi$ admits a coarse, $\Im(D\phi)$-regular stratification. By  \Cref{prop:quasi_trans_is_thom}, such a stratification is vertically extendable. Hence, by  \Cref{lem:ver.ext.thom}, this stratification satisfies the strong Thom condition, as required.
\end{itemize}
\end{proof}
\section{Local structure theorem for strongly Thom stratified morphisms}

In this section we state a precise version of \Cref{thm:intro.slice} (see \Cref{thm:structure.Thom}) and prove it.
\subsection{Idea of the proof}
In \Cref{ssec:des}, we show that quasi-transitivity is a local property in the smooth topology. Thus part \eqref{thm:intro.slice:3} of \Cref{thm:intro.slice} follows from part \eqref{thm:intro.slice:1}.
Therefore, the conclusion of \Cref{thm:intro.slice} can be restated as the existence of a map $\tilde X\to X\times_Y Z$ such that $\tilde X\to X$ is a Nisnevich neighborhood of $x$ and $\tilde X\to Z$ is a submersion.

By Artin’s approximation theorem it is enough to construct an analogous map  
$$\hat X_x\to X\times_Y Z$$
from the completion of $X$ at $x$.

We take $Z$ to be a transversal to $\phi^{-1}(\phi(x))\cap S$.

Let $V:=T_x(\phi^{-1}(\phi(x))\cap S)$. We have a map $$\hat X_x\simeq \widehat {Z\times V}_{(x,0)} \to \hat Z_x \times  \hat V_0.$$
This gives us a map $\psi_1:\hat X_x\to Z$.

We use the strong Thom property to extend the inclusion $V\to T_xX$ to a linear embedding $$V\to 
\cT_{\phi|_{\hat X_x}}:=\Ker D(\phi|_{{{\hat X_x}}}).$$ 
Using the existence and uniqueness theorem for formal ODEs this defines a map 
$$ \hat X_x \times \hat V_0 \to {\hat X}_x.$$
Define a map $\psi_2:\hat X_x \to X$ to be the composition 
$$\hat X_x \to  
\hat Z_x \times  \hat V_0 \hookrightarrow \hat X_x \times  \hat V_0 \to \hat X_x
\hookrightarrow  X.$$
The maps $\psi_1$ and $\psi_2$ give the desired map $\hat X_x\to X\times_Y Z$.





\subsection{Descent for Quasi-Transitive Morphisms}\label{ssec:des}

Recall that a correspondence between two algebraic varieties $X$ and $Y$ is a diagram of the form $X \from Z \to Y$.
Let $\psi\colon X\to Y$ be a morphism. Then, $\psi$ defines a correspondence 
\[
\psi^\dagger\colon T^*Y  \xleftarrow{\pi} T^*Y\times_Y X \xrightarrow{D\psi^*} T^*X
\]
with a dual correspondence 
\[
\psi_\dagger\colon T^*X \xleftarrow{D\psi^*}  T^*Y\times_Y X   \xrightarrow{\pi} T^*Y.  
\]
We can apply $\psi^\dagger$ to subvarieties of $T^*Y$ to get (constructible) subsets of $T^*X$. Note that when $\psi$ is a submersion, the right leg of the correspondence is a closed immersion and $\psi^\dagger$ carries subvarieties to subvarieties.   
We have the following straightforward relation between these operations.
\begin{lem} \label{lem:dagger_dim}
Let $\psi \colon X\to Y$ be a submersion {of algebraic varieties} and let $(x,v)\in T^*X$ and $(y,u)\in T^*Y$ be corresponding points under the correspondence $\psi^\dagger$. 
Then, for every $W\subseteq T^*Y$ such that $(y,u)\in W$, we have 
\[
\dim_{(x,v)}(\psi^\dagger W) = \dim_{(y,u)}(W) + \dim_{x} X - \dim_y Y.
\] 
\end{lem}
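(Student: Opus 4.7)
The plan is to analyze the two legs of the correspondence $T^*Y \xleftarrow{\pi} T^*Y \times_Y X \xrightarrow{D\psi^*} T^*X$ separately, using that $\psi^\dagger W = D\psi^*(\pi^{-1}(W))$ by definition.

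First I would exploit the left leg. Since $\pi$ is the base change of $\psi$ along $T^*Y \to Y$, and $\psi$ is a submersion, $\pi$ is itself a submersion with fibers of dimension $\dim_x X - \dim_y Y$ at the relevant points. Thus, for any subvariety $W\subseteq T^*Y$ containing $(y,u)$, the preimage $\pi^{-1}(W)\subseteq T^*Y\times_Y X$ has local dimension
\[
\dim_{(y,u,x)}\pi^{-1}(W) \;=\; \dim_{(y,u)}(W) + \dim_x X - \dim_y Y
\]
at any preimage point $(y,u,x)$ of $(y,u)$.

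Next I would handle the right leg. Since $\psi$ is a submersion, the differential $D\psi\colon \cT_X\to \psi^*\cT_Y$ is a surjection of vector bundles, so its dual $D\psi^*\colon \psi^*T^*Y \hookrightarrow T^*X$ is a fiberwise-injective map of vector bundles, hence a closed immersion. In particular, the map $D\psi^*\colon T^*Y\times_Y X\to T^*X$ preserves local dimensions at every point. The point $(x,v)\in T^*X$ is, by hypothesis, the image of $(y,u,x)\in T^*Y\times_Y X$ under $D\psi^*$, so
\[
\dim_{(x,v)}(\psi^\dagger W)=\dim_{(y,u,x)}\pi^{-1}(W).
\]
Combining the two identities yields the formula.

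There is no real obstacle beyond checking that $\pi$ is a submersion with fibers of the claimed dimension (immediate from base change) and that $D\psi^*$ is a closed immersion (immediate from $D\psi$ being a surjection of locally free sheaves). The entire argument is one line once these two facts are in place.
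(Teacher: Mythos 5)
Your proposal is correct and follows essentially the same route as the paper: you split the correspondence into its two legs, observe that the left leg $\pi$ is a submersion (as a base change of $\psi$) so that the local dimension of $\pi^{-1}(W)$ exceeds that of $W$ by the relative dimension $\dim_x X - \dim_y Y$, and then observe that $D\psi^*$ is a closed immersion (since $D\psi$ is a surjection of locally free sheaves) and hence preserves local dimensions. This is exactly the argument given in the paper.
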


\begin{proof} 

Since $\psi$ is a submersion and $\pi$ is a pullback of $\psi$ we have:
\[
\dim_{(x,u)}(\pi^{-1}(W)) = \dim_{(y,u)}(W) + \dim_{(x,u)}T^*Y\times_Y X - \dim_{(y,u)} T^*Y = \dim_{(y,u)}(W) + \dim_x X - \dim_y Y. 
\]
Moreover, $D\psi^*$ is a closed immersion and hence 
$$\psi^\dagger W = D\psi^*(\pi^{-1}(W))\cong \pi^{-1} (W)$$
so the result follows.
\end{proof}

\begin{prop}[Pullback for $\Bi$]\label{prop:pullback_for_Bi}
Let $X \xrightarrow{\psi} Y \xrightarrow{\phi} Z$ be morphisms {of algebraic varieties}. If $\psi$ is a submersion then 
\[
\Bi_{\psi\circ  \phi} = \psi^\dagger\Bi_\phi.
\]
\end{prop}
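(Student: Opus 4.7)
The plan is to verify the equality at the level of quasi-coherent algebras on $X$, exploiting the fact that both $\Bi_{\phi\circ\psi}\hookrightarrow T^*X$ and $\psi^\dagger\Bi_\phi \hookrightarrow T^*X$ are cut out by surjective quotients of $\Sym(\cT_X)$, and showing that these quotients coincide. (I interpret the statement as concerning the composition $\phi\circ \psi\colon X\to Z$.)

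First, I would analyze the left-hand side. By the chain rule, the differential of $\phi\circ \psi$ factors as
\[
D(\phi\circ\psi)\colon \cT_X \xrightarrow{D\psi} \psi^*\cT_Y \xrightarrow{\psi^*(D\phi)} (\phi\circ \psi)^*\cT_Z.
\]
Since $\psi$ is a submersion, $D\psi$ is surjective, so the image of the composition equals the image of $\psi^*(D\phi)$, namely $\psi^*\Im(D\phi)$. Thus $\Bi_{\phi\circ\psi} = \Spec_X(\Sym(\psi^*\Im(D\phi)))$, and its closed embedding into $T^*X$ corresponds to the surjection $\Sym(\cT_X)\twoheadrightarrow \Sym(\psi^*\Im(D\phi))$ obtained by composing the surjection $\Sym(\cT_X)\twoheadrightarrow\Sym(\psi^*\cT_Y)$ (again using the submersion hypothesis) with the surjection $\Sym(\psi^*\cT_Y)\twoheadrightarrow \Sym(\psi^*\Im(D\phi))$.

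Second, I would compute the right-hand side through the correspondence $T^*Y \xleftarrow{\pi} T^*Y\times_Y X \xrightarrow{D\psi^*} T^*X$. Since relative spectra of symmetric algebras commute with base change,
\[
\pi^{-1}(\Bi_\phi) = \Bi_\phi\times_Y X = \Spec_X(\Sym(\psi^*\Im(D\phi))),
\]
embedded in $T^*Y\times_Y X = \Spec_X(\Sym(\psi^*\cT_Y))$ via the surjection $\Sym(\psi^*\cT_Y)\twoheadrightarrow \Sym(\psi^*\Im(D\phi))$. Because $\psi$ is a submersion, the map $D\psi^*\colon T^*Y\times_Y X\to T^*X$ corresponds on algebras to the surjection $\Sym(\cT_X)\twoheadrightarrow \Sym(\psi^*\cT_Y)$, and is therefore a closed immersion. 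Hence $\psi^\dagger \Bi_\phi = D\psi^*(\pi^{-1}(\Bi_\phi))$ is cut out of $T^*X$ precisely by the composite surjection $\Sym(\cT_X)\twoheadrightarrow\Sym(\psi^*\cT_Y)\twoheadrightarrow \Sym(\psi^*\Im(D\phi))$.

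The two quotients of $\Sym(\cT_X)$ produced in the previous two paragraphs are manifestly the same, so $\Bi_{\phi\circ\psi}$ and $\psi^\dagger\Bi_\phi$ coincide as closed subschemes of $T^*X$. There is no substantive obstacle; the only care required is bookkeeping of the duality that trades a surjection of tangent sheaves for a closed immersion on cotangent bundles, and invoking the submersion hypothesis in two places: once to identify $\Im(D(\phi\circ\psi))$ with $\psi^*\Im(D\phi)$, and once to identify $D\psi^*$ with a closed embedding.
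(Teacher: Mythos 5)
Your proposal is correct and follows essentially the same route as the paper: both describe $\Bi_{\phi\circ\psi}$ and $\psi^\dagger\Bi_\phi$ as closed subschemes of $T^*X$ cut out by surjective quotients of $\Sym(\cT_X)$, use base-change for relative spectra, and reduce to identifying the composite surjection $\cT_X \twoheadrightarrow \psi^*\cT_Y \twoheadrightarrow \psi^*\Im(D\phi)$ with the canonical surjection onto $\Im(D(\phi\circ\psi))$. The only point worth making explicit: your step ``the image of $\psi^*(D\phi)$ is $\psi^*\Im(D\phi)$'' uses that $\psi$, being a submersion, is flat (so $\psi^*$ preserves injections and hence images); the paper flags this by observing that the map $\psi^*\im(D\phi)\to\psi^*\phi^*\cT_Z$ remains injective, and then invokes uniqueness of the epi-mono factorization in $\QCoh(X)$ to conclude the two surjections coincide.
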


\begin{proof}
First, observe that $T^*Y \cong \Spec_Y(\Sym(\cT_Y))$ and $T^*X\times_X Y \cong \Spec_X(\Sym(\psi^*\cT_X))$. 
Thus, every surjection $\cT_X \onto \fF$ gives a closed subvariety $\Spec_X(\Sym(\fF))\subseteq T^*X$, and similarly for $T^*Y$ and $T^*Y\times_Y X$. 
We shall describe how the operations $\pi^{-1}$ and $D\psi^*$ operate on subvarieties of the form $\Spec(\Sym(\fF))$ for subvarieties associated with such quotient sheaves.

\begin{itemize}
\item For $\xi \colon \cT_Y\onto \fF$, by the compatibility of the relative spectrum construction with base change we have $\pi^{-1}(\Spec_Y(\Sym(\fF))) = \Spec_X(\Sym(\psi^*\fF))$, regarded as a subvariety of $T^*Y\times_Y X = \Spec(\Sym(\psi^*\cT_Y))$ via the surjection $\psi^*\xi\colon \psi^*\cT_Y\onto \psi^*\fF$.

\item For $\nu \colon \psi^*\cT_Y\onto \fF$, since $D\psi^*$ is a closed embedding corresponding to the surjection $D\psi \colon \cT_X \to \psi^*\cT_Y$,  
$D\psi^*(\Spec_X(\Sym(\fF)))$ is the subset of $T^*X$ corresponding to the composite surjection
\[
\cT_X \xrightarrow{D\psi} \psi^*\cT_Y \xrightarrow{\nu} \fF
\]
via the construction $\Spec_X(\Sym(-))$.
\end{itemize}

Applying these two observations, by the definition of $\Bi_{\phi}$ and $\Bi_{\psi\circ \phi}$, it remains to identify the surjection 
\[
\cT_X \xrightarrow{D\psi} \psi^*\cT_Y \onto \psi^*\im(D\phi) 
\]
with the surjection 
\[
\cT_X \onto \im(D(\psi\circ \phi)).  
\]
Consider the diagram 
\[
\xymatrix{
\cT_X\ar@{->>}[d]\ar_{D(\phi\circ \psi)}[rrd] \ar^{D\psi}@{->>}[r] & \psi^*\cT_Y \ar@{->>}[r] \ar^{\psi^*D\phi}[rd] & \psi^*\im(D\phi)\ar[d] \\ 
\im(D(\phi\circ \psi))\ar[rr] &  & \psi^*\phi^*\cT_Z \\ 
}
\]
It commutes because {of the chain rule} $\psi^*D\phi\circ D\psi = D(\phi\circ \psi)$. The right vertical map is an injection since $\psi$ is flat and the lower horizontal map is injective by the definition of image. The upper left horizontal morphism is a surjection since $\psi$ is a submersion and the upper right horizontal map is a surjection since $\psi^*$ is right exact. Finally, the left vertical morphism is a surjection by the definition of image. Since the decomposition of a morphism in the abelian category $\QCoh(X)$ into a surjection followed by an injection is unique, we obtain the desired identification of the two surjections displayed above.

\end{proof}

By intersecting with the preimage of $z\in Z$, we obtain the following immediate consequence. 

 \begin{corollary}\label{cor:psi_dagger_fiber_B}
 Let $X \xrightarrow{\psi} Y \xrightarrow{\phi} Z$ be morphisms {of algebraic varieties}. If $\psi$ is a submersion then 
 \[
 (\phi\circ\psi  \circ\Pi_{\Bi_{\psi\circ \phi}})^{-1}(z) = \psi^\dagger \left( (\phi \circ\Pi_{\Bi_{\phi}})^{-1}(z) \right).
 \]
 \end{corollary}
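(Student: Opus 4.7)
The plan is to derive this corollary directly from \Cref{prop:pullback_for_Bi} by intersecting both sides of the identity $\Bi_{\psi\circ\phi}=\psi^\dagger\Bi_\phi$ with the appropriate preimage. There should be no new geometric input beyond what the proposition already provides; the only work is bookkeeping the correspondence $\psi^\dagger$ through the fibration over $Z$ coming from $\phi$.

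Concretely, I would unpack both sides as subsets of $T^*X$. A point on the left-hand side is a pair $(x,v)\in \Bi_{\psi\circ\phi}$ satisfying $\phi(\psi(x))=z$. Because $\psi$ is a submersion, the right leg $D\psi^*\colon T^*Y\times_Y X\to T^*X$ of the correspondence defining $\psi^\dagger$ is a closed immersion, so by \Cref{prop:pullback_for_Bi} every such $(x,v)$ is uniquely of the form $D\psi^*(x,u)$ for some $(x,u)\in T^*Y\times_Y X$ with $(\psi(x),u)\in \Bi_\phi$. Writing $y:=\psi(x)$, the constraint $\phi(\psi(x))=z$ becomes $\phi(y)=z$, which is precisely the condition $(y,u)\in (\phi\circ\Pi_{\Bi_\phi})^{-1}(z)$. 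Unwinding, the left-hand side equals $D\psi^*\!\left(\pi^{-1}\!\left((\phi\circ\Pi_{\Bi_\phi})^{-1}(z)\right)\right)$, which by definition of $\psi^\dagger$ is the right-hand side.

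The only subtlety to watch is that $D\psi^*$ really is a closed immersion, so that the equality (and not merely an inclusion) holds after transporting the fiber $(\phi\circ\Pi_{\Bi_\phi})^{-1}(z)\subseteq T^*Y$ through the correspondence; this is exactly where the submersivity hypothesis on $\psi$ enters. I expect no real obstacle beyond this, which is consistent with the paper's own characterization of the statement as an immediate consequence of \Cref{prop:pullback_for_Bi}.
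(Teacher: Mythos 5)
Your argument is correct and is exactly the paper's intended proof: the paper derives the corollary from \Cref{prop:pullback_for_Bi} by the one-line remark "intersecting with the preimage of $z\in Z$," and your writeup is just an explicit unpacking of that intersection through the correspondence $\psi^\dagger$, using the injectivity of $D\psi^*$ to ensure the fiber condition transports cleanly.
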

{We are ready to prove our descent result for} {quasi-transitivity.}
\begin{prop}[Descent for Quasi-Transitive Morphisms]
\label{prop:descent} 
 Let $X \xrightarrow{\psi} Y \xrightarrow{\phi} Z$ be morphisms {of algebraic varieties}, where $\psi$ is a submersion. 
 \begin{enumerate}
 \item If $\phi$ is quasi-transitive then so is $\phi\circ \psi$. 
 \item Conversely, if $\psi$ is surjective and $\phi\circ \psi$ is quasi-transitive then so is $\phi$. 
 \end{enumerate} 
\end{prop}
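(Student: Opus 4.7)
The plan is to reduce both statements to a single identity relating the ``quasi-transitivity defect''
\[
\delta_\phi(y,u) \;:=\; \dim_{(y,u)}(\phi\circ \Pi_{\Bi_\phi})^{-1}(\phi(y)) - \dim_y Y
\]
on $\Bi_\phi$ with the analogous defect $\delta_{\phi\circ\psi}(x,v)$ on $\Bi_{\phi\circ\psi}$, at points that correspond under the correspondence $\psi^\dagger$. Quasi-transitivity is precisely the statement $\delta \le 0$, so if I can establish $\delta_{\phi\circ\psi}(x,v) = \delta_\phi(y,u)$ whenever $(x,v)$ and $(y,u)$ correspond, then part (1) is immediate and part (2) reduces to a surjectivity argument on points.

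For part (1), I will fix $(x,v)\in \Bi_{\phi\circ\psi}$ and set $z := \phi(\psi(x))$. By \Cref{prop:pullback_for_Bi}, $\Bi_{\phi\circ\psi} = \psi^\dagger \Bi_\phi$, so $(x,v)$ is the image of some $(y,u)\in \Bi_\phi$ with $y = \psi(x)$, and $(y,u)$ lies in $(\phi\circ \Pi_{\Bi_\phi})^{-1}(z)$. Applying \Cref{cor:psi_dagger_fiber_B} with $W := (\phi\circ \Pi_{\Bi_\phi})^{-1}(z)$ and then \Cref{lem:dagger_dim} to this $W$ gives
\[
\dim_{(x,v)}(\phi\circ\psi\circ\Pi_{\Bi_{\phi\circ\psi}})^{-1}(z) \;=\; \dim_{(y,u)} W + \dim_x X - \dim_y Y,
\]
which is exactly $\delta_{\phi\circ\psi}(x,v) + \dim_x X = \delta_\phi(y,u) + \dim_x X$. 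Hence $\delta_\phi(y,u)\le 0$ forces $\delta_{\phi\circ\psi}(x,v)\le 0$.

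For part (2), the same identity is run in the opposite direction. Given $(y,u)\in \Bi_\phi$, the surjectivity of $\psi$ produces $x\in X$ with $\psi(x) = y$, and then the closed immersion $D\psi^*$ in the correspondence $\psi^\dagger$ produces a unique $(x,v)\in \Bi_{\phi\circ\psi}$ corresponding to $(y,u)$. Invoking the same dimension identity and the hypothesis $\delta_{\phi\circ\psi}(x,v)\le 0$ yields $\delta_\phi(y,u)\le 0$, as required.

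I do not expect any genuine obstacle: the content is already packaged in \Cref{prop:pullback_for_Bi}, \Cref{cor:psi_dagger_fiber_B}, and \Cref{lem:dagger_dim}. The only thing to be careful about is the bookkeeping of which leg of the correspondence $\psi^\dagger$ is a pullback (used in part (1) to lift a point of $\Bi_\phi$ to one of $\Bi_{\phi\circ\psi}$ over any given $x$) and which leg is the closed immersion $D\psi^*$ (used in part (2), together with surjectivity of $\psi$ on $X$, to realize every point of $\Bi_\phi$ as coming from $\Bi_{\phi\circ\psi}$).
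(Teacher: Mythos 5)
Your argument is correct and follows the same route as the paper: both proofs combine \Cref{prop:pullback_for_Bi}, \Cref{cor:psi_dagger_fiber_B}, and \Cref{lem:dagger_dim} to relate local fiber dimensions under $\psi^\dagger$, then read off quasi-transitivity from the resulting dimension identity. Your packaging via the defect $\delta$ and the explicit equality $\delta_{\phi\circ\psi}(x,v)=\delta_\phi(y,u)$ is a minor organizational tidying of the paper's inequality chain (which treats (2) as ``a similar computation''), not a different method.
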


\begin{proof}
For $(1)$, let $x\in X$ and let $(x,v)\in \Bi_{\phi\circ \psi}$. We have to show that 
\[
\dim_{(x,v)}(\phi\circ \psi \circ \Pi_{\Bi_{\phi\circ \psi}})^{-1}(\phi\circ \psi(x)) = \dim_{x} X.  
\]
By \Cref{cor:psi_dagger_fiber_B}, the left-hand side is 
$
\dim_{(x,v)}(\psi^\dagger(\phi\circ \Pi_{\Bi_\phi})^{-1}(\phi(\psi(x))))
$
and we may write $v = D\psi^*(u)$ for some $u\in T^*_{\psi(x)} Y \otimes_{k(\psi(x))} k(x)$. 
We deduce that
\begin{align*}
\dim_{(x,v)}(\phi\circ \psi \circ \Pi_{\Bi_{\phi\circ \psi}})^{-1}(\phi\circ \psi(x)) & \stackrel{(\text{lem. }\ref{cor:psi_dagger_fiber_B})}{=} \dim_{(x,v)}(\psi^\dagger(\phi\circ \Pi_{\Bi_\phi})^{-1}(\phi(\psi(x)))) \stackrel{(\text{cor. } \ref{lem:dagger_dim})}{=} \\
&=
\dim_{(\psi(x),u)}((\phi\circ \Pi_{\Bi_\phi})^{-1}(\phi(\psi(x)))) + \dim_x X - \dim_y Y \stackrel{(*)}{\le} \\
&\le \dim_y(Y)  + \dim_x X - \dim_y Y= \dim_x X.
\end{align*}
where $(*)$ follows from the assumption that $\phi$ is quasi-transitive.

For $(2)$, given a point $(y,u)\in \Bi_{\psi}$, using \Cref{prop:pullback_for_Bi} and the surjectivity of $\psi$, we can find a point $(x,v)\in \Bi_{\phi\circ \psi}$ which corresponds to it under the correspondence $\psi^\dagger$. The result now follows by a similar computation to the one in the previous item.

\end{proof}

\subsection{Applications of Artin's approximation theorem}
For an algebraic variety $Y$ and a point $y\in Y$, we let $Y_y = \Spec(\cO_{Y,y})$ be the localization of $Y$ at $y$, $ \widehat{Y}_y = \Spec(\widehat{\cO}_{Y,y})$ the completion of $Y_y$ and $Y_y^h$ the Henselization of $Y_y$. For a natural number $n$, we further denote  by $Y_{y,n}:= \Spec(\cO_{Y,y}/\mathfrak{m}_y^n)$ where $\mathfrak{m}_y$ is the maximal ideal of $\cO_{Y,y}$. 
We record the following formulation of the Artin Approximation Theorem:
\begin{thm}[Artin Approximation] \label{prop:artin_section}
Let $\phi \colon X\to Y$ be a morphism of algebraic varieties.
Assume that there is a section 
from the formal completion of $Y$ at a point $y$ to $X$:

\[
\xymatrix{
                               & & X\ar^{\phi}[d] \\
\widehat{Y}_y\ar^{\hat{s}}[rru]\ar[rr]&   & Y 
}
\]
Then, for every $n \in \mathbb{N}$ there exists a morphism $s^h\colon Y_y^h\to X$ making the following diagram commutative: 
\[
\xymatrix{
 \widehat{Y}_{y}\ar^{\hat{s}}[rr]                                 &    & X\ar^{\phi}[d] \\
Y_{y,n}\ar^{\hat{s}|_{Y_{y,n}}}[rru]\ar[r]\ar[u] & Y^h_y\ar@{..>}_{s^h}[ru]\ar[r]      & Y 
}
\]

\end{thm}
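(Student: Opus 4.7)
The plan is to reduce this formulation to Artin's classical approximation theorem for solutions of polynomial systems over Henselian local rings. The substantive content lies in that classical result; our work is to translate the diagrammatic setup into the equational form it requires.

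First I would carry out an affine reduction. Let $x\in X$ be the image under $\hat{s}$ of the closed point of $\widehat{Y}_y$; by commutativity $\phi(x)=y$. Since $\widehat{Y}_y$ is a local scheme, $\hat{s}$ factors through every Zariski-open neighborhood of $x$. I can therefore shrink $X$ to an affine open neighborhood of $x$ and $Y$ to an affine open neighborhood of $y$, writing $X=\Spec(A)$ and $Y=\Spec(B)$ with $A=B[t_1,\ldots,t_m]/(f_1,\ldots,f_r)$ a finitely presented $B$-algebra. Any morphism $s^h\colon Y_y^h\to X$ that agrees with $\hat{s}$ modulo $\mathfrak{m}_y$ sends the closed point of $Y_y^h$ to $x$, so it too factors through the chosen affine chart and the reduction loses no information.

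In this presentation, a $Y$-morphism $\Spec(R)\to X$ from a $B$-algebra $R$ is the same data as a tuple $(a_1,\ldots,a_m)\in R^m$ with $f_j(a_1,\ldots,a_m)=0$ for every $j$. The formal section $\hat{s}$ therefore corresponds to a tuple $(\hat{a}_1,\ldots,\hat{a}_m)\in \widehat{\mathcal{O}}_{Y,y}^m$ solving this polynomial system. The classical Artin approximation theorem, applied to this system and to the given formal solution, yields for every $n$ a tuple $(a^h_1,\ldots,a^h_m)\in (\mathcal{O}_{Y,y}^h)^m$ that solves the same system and satisfies $a^h_i\equiv \hat{a}_i \pmod{\mathfrak{m}_y^n}$ for all $i$. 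This tuple defines the required $s^h\colon Y_y^h\to X$; the congruence translates precisely into the equality of the two composites $Y_{y,n}\to Y_y^h\xrightarrow{s^h}X$ and $Y_{y,n}\to\widehat{Y}_y\xrightarrow{\hat{s}}X$, giving the commutativity demanded by the second diagram.

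The main obstacle is not technical but lies in invoking the correct black box: the full strength of Artin's approximation theorem for Henselian local rings that arise as Henselizations of finitely generated algebras over a field. Once that theorem is at hand, the proof amounts to the short translation above, with the only substantive care going into the affine reduction, which is justified by the locality of $\widehat{Y}_y$ at its unique closed point.
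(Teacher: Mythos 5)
Your proof is correct and reaches the same conclusion by a closely related route. The paper instead applies the functorial form of Artin's theorem directly: it considers the functor $F$ on $A$-algebras ($A=\mathcal O(Y^h_y)$) sending $R$ to the set of lifts of $\Spec(R)\to Y$ along $\phi$, observes that $F$ is locally of finite presentation because $X$ and $Y$ are of finite type, and invokes Artin's Theorem~I.12 to approximate the formal point $\hat s\in F(\widehat A)$ by $s^h\in F(A)$. You instead perform the affine reduction by hand and invoke the classical equational form of Artin approximation (a formal solution of a finite polynomial system over $\mathcal O_{Y,y}$ is approximated in $\mathcal O_{Y,y}^h$). These two formulations of Artin's theorem are interchangeable, and your reduction — factoring $\hat s$ through an affine chart by locality of $\widehat Y_y$, translating $Y$-morphisms to $X=\Spec B[t_1,\dots,t_m]/(f_1,\dots,f_r)$ into solution tuples, and reading off commutativity of the diagram from the congruence $a_i^h\equiv\hat a_i\pmod{\mathfrak m_y^n}$ — is carried out correctly. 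The paper's version is marginally shorter because it delegates the affine reduction to the "locally of finite presentation" hypothesis of the functorial theorem; your version makes that reduction explicit, which trades a line of verification for a more elementary black box.
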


\begin{proof}
Let $A = \cO(Y^h_y)$ be the Henselian local ring of $Y$ at $y$, $\mathfrak{m}_A$ its maximal ideal, and $\widehat{A}$ its completion at $\mathfrak{m}_A$ so that $\Spec(\widehat{A}) = \widehat{Y}_y$. 
Consider the functor 
\[
F\colon \{A\text{-algebras}\} \to \text{Sets}
\]
sending $A\to R$ to the set of lifts of the map $\Spec(R)\to Y^h_y \to Y$ along $\phi$ as in the diagram
\[
\xymatrix{
&    & X\ar^{\phi}[d] \\
\Spec(R)\ar@{..>}[rru]\ar[r] & Y^h_y\ar[r]      & Y 
}
\]
Since $X$ and $Y$ are of finite type, this functor is locally of finite presentation in the sense of \cite[Definition I.5]{artin1969algebraic}. The map $\hat{s}$ gives a point $\hat{s}\in F(\widehat{A})$. Hence, by \cite[Theorem I.12]{artin1969algebraic}, there is a point $s^h\in F(A)$ whose image in $F(A/\mathfrak{m}_A^n)$ coincides with that of $\hat{s}$, as required.
\end{proof}

\begin{cor}[Spread-out]\label{cor:spred}
Let $\phi \colon X\to Y$ be a morphism of algebraic varieties.
Assume that there is a section 
from the formal completion of $Y$ at a point $y$ to $X$:

\[
\xymatrix{
                               & & X\ar^{\phi}[d] \\
\widehat{Y}_y\ar^{\hat{s}}[rru]\ar[rr]&   & Y 
}
\]
Then, for every $n\in \mathbb{N}$, there is an \'{e}tale map $ U\to Y$ which is a Nisnevich neighborhood of $y$ and a map $s^U\colon U\to X$ that fits into a commutative diagram 
\[
\xymatrix{
 \widehat{Y}_{y}\ar^{\hat{s}}[rr]                                 &    & X\ar^{\phi}[d] \\
Y_{y,n}\ar^{\hat{s}|_{Y_{y,n}}}[rru]\ar@{..>}[r]\ar[u] \ar@/^-0.8pc/[rr] & U\ar@{..>}_{s^U}[ru]\ar[r]      & Y 
}
\]
\end{cor}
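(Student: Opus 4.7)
The plan is to combine \Cref{prop:artin_section} with the description of the Henselization as a cofiltered limit of \'{e}tale Nisnevich neighborhoods, and then to descend the resulting Henselian section to a finite stage via a standard finite-presentation argument.

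First, I apply \Cref{prop:artin_section} to the data at hand, which produces a Henselian lift $s^h \colon Y^h_y \to X$ with $\phi \circ s^h$ equal to the canonical map $Y^h_y \to Y$ and satisfying $s^h|_{Y_{y,n}} = \hat{s}|_{Y_{y,n}}$. Next, I invoke the classical identification
\[
\cO^h_{Y,y} \;=\; \colim_{(U,u)} \cO(U),
\]
the filtered colimit running over pointed \'{e}tale neighborhoods $(U,u) \to (Y,y)$ with trivial residue-field extension at $u$, i.e., over \'{e}tale Nisnevich neighborhoods of $y$ in $Y$. Equivalently, $Y^h_y$ is the cofiltered limit of such $U$ in the category of affine $Y$-schemes.

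Since $X$ and $Y$ are algebraic varieties, $\phi$ is of finite presentation, so by the standard limit theorem of Grothendieck (EGA IV$_3$, \S 8) any $Y$-morphism from a cofiltered limit of affine $Y$-schemes into $X$ factors through a finite stage. Applying this to $s^h$ yields an \'{e}tale Nisnevich neighborhood $U \to Y$ of $y$ together with a $Y$-morphism $s^U \colon U \to X$ whose composition with the canonical map $Y^h_y \to U$ recovers $s^h$. The dotted arrow $Y_{y,n} \to U$ in the diagram is then defined as the composition $Y_{y,n} \to Y^h_y \to U$, and the required commutativity $s^U \circ (Y_{y,n} \to U) = \hat{s}|_{Y_{y,n}}$ is inherited directly from the Henselian identity of Step~1. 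There is no substantive obstacle beyond bookkeeping once these standard ingredients are in place; the only point worth verifying is that the neighborhoods appearing in the colimit presentation are genuinely Nisnevich (i.e., admit a distinguished preimage of $y$ with trivial residue extension), which is built into the construction of $\cO^h_{Y,y}$.
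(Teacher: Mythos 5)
Your proof is correct and follows essentially the same route as the paper: apply \Cref{prop:artin_section} to obtain the Henselian lift $s^h$, then use the description of $\cO^h_{Y,y}$ as a filtered colimit of coordinate rings of \'etale Nisnevich neighborhoods together with the finite-presentation limit theorem to descend $s^h$ to a finite stage $U$. The paper's proof merely compresses the descent step into the one-line remark that $X,Y$ are of finite type; you have supplied the standard justification behind that remark.
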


\begin{proof}  
By Artin Approximation, we have a lift $s^h\colon Y^h_y \to X$ which agrees with $\hat{s}$ on $Y_{y,n}$. 
{Since $X$ and $Y$ are of finite type,}
we can find a factorization of $s^h$ through such an \'{e}tale $Y$-scheme $U$, as required.
\end{proof}


%

\subsection{Structure Theorem}

To formulate the structure theorem for strong Thom stratifications we recall the following notion. 
\begin{defn}[Dimension of Morphism]
Let $\phi \colon X\to Y$ be a morphism of algebraic varieties.
We denote by $\dim(\phi)$ the maximum of the dimensions of the fibers of $\phi$.  
\end{defn}

Let $V$ be a finite dimensional vector space over $\C$, and let $X$ be an analytic variety over $\C$. Let $\xi \colon V\to \Gamma(X,\cT_X)$ be a linear map from $V$ to the space of analytic vector fields on $X$. We can then (locally) integrate the vector fields in $\xi(V)$ to get a continuous map $B \times U  \to X$ for a small disc $B$ around the origin of $V$ and a small open set $U\subseteq X$. Our next goal is to show that this construction works over an arbitrary field of characteristic zero by replacing the small neighborhoods in $V$ and $X$ by the formal completions.   

\begin{defn}
Let $F$ be a field of characteristic zero, let $V$ be a finite dimensional vector space over $F$, and let $\xi \colon V\to \Gamma(X,\cT_X)$ be a linear map. For a point $x\in X$ 
we define the map 
\[
\exp_\xi \colon \widehat{V\times X}_{(0,x)} \to \widehat{X}_x
\]
by the formula 
\[
\exp_\xi^*(f) = \sum_{n=0}^\infty \frac{(\sum_{i=1}^k t_i \xi(v_i))^n (f)}{n!},
\]
where:
\begin{itemize}
\item $v_1,\dots,v_k$ is a basis of $V$. 
\item $t_1,\dots,t_k$ the dual basis of $V^*$, which we regard as elements of $\widehat{\cO}_{V,0}$. 
\item We regard $\xi(v_i)$ as a derivation of $\widehat{\cO}_{X,x}$.
\end{itemize}
\end{defn}
The formula is clearly independent of the choice of basis, and it is straightforward to check that it defines a (continuous) ring map 
$
\exp_\xi^* \colon \widehat{\cO}_{X,x} \to \widehat{\cO}_{V\times X,(0,x)},
$
and hence a map 
$\exp_\xi\colon \widehat{V\times X}_{(0,x)} \to \widehat{X}_x$ as claimed.

\begin{prop}\label{prop:exp.props}
The maps $\exp_\xi$ have the following properties: 
\begin{enumerate}
\item\label{prop:exp.props:1} If $\xi = 0$ then $\exp_\xi$ is the completion of the projection $V\times X \to X$.  
\item The restriction of $\exp_\xi$ along the zero section $\widehat{X}_x\into \widehat{V\times X}_{(0,x)}$ is the identity of $\widehat{X}_x$.  
\item\label{prop:exp.props:3} The differential $D\exp_{\xi} \colon V\oplus T_x X \to T_x X$ is given by 
\[
D\exp_{\xi}(v,u)= \xi(v)|_x + u. 
\]
\item\label{prop:exp.props:4} Let $\phi \colon X\to Y$ be a morphism sending $x\in X$ to $y\in Y$, and let $\xi_X\colon V\to \Gamma(X,\cT_X)$ and $\xi_Y \colon V\to \Gamma(Y,\cT_Y)$ be compatible maps, in the sense that the diagram 
\[
\xymatrix{
V\ar^{\xi_Y}[d] \ar^{\xi_X}[r] & \Gamma(X,\cT_X)\ar^{D\phi}[d] \\
\Gamma(Y,\cT_Y)\ar^{\phi^*}[r] & \Gamma(X,\phi^*\cT_Y)
}
\]
commutes. Then the diagram 
\[
\xymatrix{
\widehat{V\times X}_{(0,x)}\ar^{\exp_{\xi_X}}[r]\ar_{\widehat{\id_V \times \phi}}[d] & \widehat{X}_x \ar^{\hat{\phi}}[d] \\
\widehat{V\times Y}_{(0,y)} \ar^{\exp_{\xi_Y}}[r] & \widehat{Y}_y
}
\]
also commutes.
\end{enumerate}
\end{prop}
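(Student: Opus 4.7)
The plan is to verify all four items by direct computation from the defining power series. Properties (\ref{prop:exp.props:1}) and (2) collapse immediately: if $\xi=0$, every derivation $\xi(v_i)$ is zero, so only the $n=0$ term of $\sum_n \frac{1}{n!}(\sum_i t_i \xi(v_i))^n(f)$ survives and $\exp_\xi^*(f) = f$, which is exactly the pullback along the projection $V \times X \to X$. Similarly, the zero section $\widehat X_x \hookrightarrow \widehat{V \times X}_{(0,x)}$ corresponds to the ring map setting all $t_i$ to zero, and the same collapse of the series to its $n=0$ term shows that the composite $\widehat{X}_x \to \widehat{V\times X}_{(0,x)} \xrightarrow{\exp_\xi} \widehat{X}_x$ is the identity.

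For property (\ref{prop:exp.props:3}) I would expand $\exp_\xi^* f$ modulo the square of the maximal ideal $\mathfrak m_{(0,x)}$ of $\widehat\cO_{V\times X,(0,x)}$. Only the $n=0$ and $n=1$ terms contribute, giving
\[
\exp_\xi^* f \;\equiv\; f + \sum_i t_i \, \xi(v_i)(f) \pmod{\mathfrak m_{(0,x)}^2}.
\]
A tangent vector $(v,u) \in V \oplus T_x X$ corresponds to the derivation $D_{(v,u)}$ with $D_{(v,u)}(t_i) = t_i(v)$ and $D_{(v,u)}(g) = u(g)$ for $g \in \widehat\cO_{X,x}$; applying it to the displayed expansion (the summands carrying an undifferentiated factor of $t_i$ vanish at $(0,x)$) yields $D\exp_\xi(v,u)(f) = u(f) + \xi(v)|_x(f)$, as claimed.

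The main content is in property (\ref{prop:exp.props:4}). The key identity is
\[
\xi_X(v)(\hat\phi^* g) \;=\; \hat\phi^*(\xi_Y(v)(g)) \qquad \text{for every } v\in V,\ g\in \widehat\cO_{Y,y},
\]
which is the statement that $\xi_X(v)$ and $\xi_Y(v)$ are $\phi$-related, a direct reformulation of the commutativity of the square in the hypothesis. Since $(\widehat{\id_V\times\phi})^*$ acts as the identity on the $t_i$'s and as $\hat\phi^*$ on $\widehat\cO_{Y,y}$, iterating the above identity gives, by induction on $n$,
\[
\Bigl(\sum_i t_i\,\xi_X(v_i)\Bigr)^n(\hat\phi^* g) \;=\; (\widehat{\id_V\times\phi})^* \Bigl(\bigl(\sum_i t_i\,\xi_Y(v_i)\bigr)^n(g)\Bigr).
\]
Dividing by $n!$ and summing, one obtains $\exp_{\xi_X}^* \circ \hat\phi^* = (\widehat{\id_V\times\phi})^* \circ \exp_{\xi_Y}^*$, which is the desired commutativity.

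I expect (\ref{prop:exp.props:4}) to be the step requiring the most care; specifically, one must check that the iterated derivations extend continuously to the completed local rings and that the infinite sums converge in the $\mathfrak m$-adic topology. This is not difficult once one observes that $(\sum_i t_i \xi(v_i))^n$ raises $\mathfrak m$-adic filtration by $n$, so the series for $\exp_\xi^*(f)$ converges term-by-term modulo any power of the maximal ideal; with this in hand, both ring maps are continuous and it suffices to compare them on finitely many generators.
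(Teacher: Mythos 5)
Your proof is correct and follows essentially the same route as the paper: item (1) by collapsing the series to its $n=0$ term, item (2) by observing $\exp_\xi^* f \equiv f$ modulo $(t_1,\dots,t_k)$, item (3) by truncating the series to first order, and item (4) by iterating the $\phi$-relatedness identity $\xi_X(v)(\phi^*g)=\phi^*(\xi_Y(v)(g))$. The paper states each of these as a one-line observation; your write-up simply makes the inductions and filtration/convergence checks explicit.
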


\begin{proof}
All these claims follow easily from the formula of $\exp_\xi$. Specifically:
\begin{enumerate}
\item If $\xi(v)(f) \equiv 0$ for all $v\in V$ then $\exp_\xi^*(f) = f$ for all $f\in \widehat{\cO}_{X,x}$.
\item Follows from the fact that $\exp_\xi^*(f) \equiv f \mod (t_1,\dots,t_n)$. 
\item Follows from the fact that $\exp_\xi^*f   = f + \sum_i t_i \xi(v_i)(f) \mod (t_1,\dots,t_k)^2$. 
\item Follows from the fact that, by our compatibility assumption between $\xi_X$ and $\xi_Y$, we have 
\[
\xi_X(v)(\phi^*f) = \phi^*(\xi_Y(v)f)
\]
for all $f\in \widehat{\cO}_{Y,y}$ and all $v\in V$.
\end{enumerate}
\end{proof}

\begin{lem}\label{lem:form.ODE}
Let $\phi \colon X\to Y$ be a strongly Thom stratified morphism {of smooth algebraic varieties cover a field $F$ of characteristic $0$.} and let $S\subset X$ be a stratum. Let $i\colon Z\into X$ be a locally closed embedding of a smooth subvariety and let $x\in S\cap Z$ such that $$T_x(S\cap\phi^{-1}(\phi(x)))\oplus T_xZ=T_xX.$$  
Then there exists a morphism $\psi \colon \widehat X_x \to Z$ that renders the following diagram commutative:
\[
\xymatrix{
\{x\} \ar@{=}[d]\ar[r] &\widehat{X}_x\ar[r]\ar^\psi@{..>}[d] & X \ar^\phi[r] & Y \\
\{x\}\ar[r]&Z\ar_i[rr]                     &                 & X\ar^{\phi}[u]                 
}
\]
and such that the induced map on tangent spaces $D\psi_x \colon T_{x}X \cong T_x \widehat{X}_x \to T_{\psi(x)} Z$ is surjective. 
\end{lem}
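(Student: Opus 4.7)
The plan is to produce $\psi$ as the composition of a formal trivialization $\widehat{X}_x \xrightarrow{\sim} \widehat{V\times Z}_{(0,x)}$, where $V := T_x(S\cap \phi^{-1}(\phi(x)))$, with the projection to $\widehat{Z}_x$ and then to $Z$. The trivialization will be built via the exponential construction $\exp_\xi$, with $\xi$ produced from the strong Thom condition.

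In detail, I would pick a basis $v_1,\dots,v_k$ of $V$ and, by the strong Thom condition (\Cref{def:str.thom}), extend each $v_j$ to a regular vector field $\tilde v_j \in \cT_\phi(U)$ defined on a common open neighborhood $U$ of $x$. This produces a linear map $\xi\colon V\to \Gamma(U,\cT_\phi)$ and, by the construction preceding \Cref{prop:exp.props}, a formal morphism $\exp_\xi\colon \widehat{V\times X}_{(0,x)}\to \widehat{X}_x$. Restricting along $\id_V\times i\colon V\times Z\hookrightarrow V\times X$ produces
$$
\Phi\colon \widehat{V\times Z}_{(0,x)} \longrightarrow \widehat{X}_x.
$$
By \Cref{prop:exp.props}\eqref{prop:exp.props:3} the differential of $\exp_\xi$ at $(0,x)$ is $(v,u)\mapsto \xi(v)|_x + u = v+u$, so $D\Phi_{(0,x)}$ is the sum map $V\oplus T_xZ\to T_xX$, which is an isomorphism by hypothesis. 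Since $\Phi$ is a morphism between formal spectra of regular complete local rings of the same dimension inducing an isomorphism on cotangent spaces, $\Phi$ itself is an isomorphism (via Nakayama).

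For the compatibility with $\phi$, the inclusion $\xi(V)\subseteq \cT_\phi$ says exactly that the zero map $\xi_Y := 0\colon V\to \Gamma(Y,\cT_Y)$ is compatible with $\xi$ in the sense of \Cref{prop:exp.props}\eqref{prop:exp.props:4}. Combining that item with \Cref{prop:exp.props}\eqref{prop:exp.props:1} (applied to $\xi_Y$) shows that $\widehat{\phi}\circ \exp_\xi$ equals the projection $\widehat{V\times X}_{(0,x)}\to \widehat{X}_x$ followed by $\widehat{\phi}$; restricting to $\widehat{V\times Z}_{(0,x)}$ gives $\widehat{\phi}\circ \Phi = \widehat{\phi}\circ \widehat{i}\circ p$, where $p$ denotes the projection to $\widehat{Z}_x$. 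Defining
$$
\psi\colon \widehat{X}_x \xrightarrow{\Phi^{-1}} \widehat{V\times Z}_{(0,x)} \xrightarrow{p} \widehat{Z}_x \longrightarrow Z
$$
yields the commutativity required by the lemma, and $D\psi_x\colon T_xX\to T_xZ$ is the composite of the isomorphism $T_xX \cong V\oplus T_xZ$ with the second projection, hence surjective. The only mildly delicate step is the application of the formal inverse function theorem to turn the tangent-space isomorphism at $(0,x)$ into an isomorphism of $\Phi$; everything else is a direct bookkeeping of the properties of $\exp_\xi$ recorded in \Cref{prop:exp.props}.
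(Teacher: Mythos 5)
Your proposal is correct and follows essentially the same route as the paper's own proof: extend a basis of $T_x(S\cap\phi^{-1}(\phi(x)))$ to local vertical vector fields via the strong Thom hypothesis, form $\exp_\xi$, restrict along $V\times Z\hookrightarrow V\times X$ to get a formal isomorphism $\widehat{V\times Z}_{(0,x)}\xrightarrow{\sim}\widehat{X}_x$ by the tangent-space criterion, define $\psi$ as inverse followed by projection to $Z$, and deduce commutativity from \Cref{prop:exp.props}\eqref{prop:exp.props:1} and \eqref{prop:exp.props:4} applied with $\xi_Y=0$. The only cosmetic divergence is that the paper takes $V$ to be the span of the extended vector fields inside $\Gamma(X,\cT_X)$ with $\xi$ the inclusion, while you take $V$ to be the abstract tangent space and build $\xi$ by choosing extensions of a basis; both give the same $\exp_\xi$.
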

\begin{proof} 
Let $v_1,\dots,v_k\in \Ker(D\psi)$ be independent vector fields defined in a neighborhood of the point $x$ which span the tangent space $T_x S\cap \phi^{-1}(\phi(x))$ (such exist by the assumption that $\phi$ is strongly Thom stratified). By shrinking $X$ if necessary, we may assume that the $v_i$’s are defined on all of $X$. Let $V$ be the span of the $v_k$’s in $\Gamma(X,\cT_X)$ and let $\xi \colon V\into \Gamma(X,\cT_X)$ be the inclusion. We can then form the map $\exp_\xi \colon \widehat{V\times X}_{(0,x)} \to \widehat{X}_x$. 

Let $i\colon Z\into X$ be the inclusion, and consider the composition 
\[
\nu \colon \widehat{V\times Z}_{(0,x)}  \oto{\widehat{\id_V \times i}} \widehat{V\times X}_{(0,x)} \oto{\exp_\xi} \widehat{X}_x.
\]
We claim that $\nu$ is an isomorphism. Indeed, since $\nu^*$ is a local morphism between the formal completions of smooth algebras over $F$, it suffices to show that $\nu$ induces an isomorphism on the tangent spaces. By \Cref{prop:exp.props}\eqref{prop:exp.props:3}, the map $D\nu$ is identified with the sum map 
\[
T_x S \cap \phi^{-1}(\phi(x)) \oplus   T_x Z \to T_x X,
\]
which is an isomorphism by our assumption. 

We now define the map $\psi$ by the composition 
\[
\psi\colon \widehat{X}_x \oto{\nu^{-1}} \widehat{V\times Z}_{(0,x)} \oto{\text{pr}_Z} \widehat{Z}_{x} \to Z. 
\]
The map $D\psi$ is surjective since $D\text{pr}_Z$ is a linear projection. 
The diagram in the statement of the proposition is the outer diagram of:
\[
\begin{tikzcd}[column sep=large, row sep=large]
\widehat{V\times Z}_{(0,x)} 
  \arrow[r, "\widehat{\id_V\times i}"] 
  \arrow[d, "pr_Z"'] 
  \arrow[rr, bend left=25, "\sim"{below}, "\nu"] 
  & \widehat{V\times X}_{(0,x)} 
    \arrow[r, "\exp_\xi"] 
    \arrow[d, "pr_X"'] 
  & \widehat{X}_x 
    \arrow[r] 
   \arrow[rounded corners, to path={ -- ([yshift=7ex]\tikztostart.north) -- 
    node[above]{$\psi$}
    ([xshift=-45ex, yshift=7ex]\tikztostart.north) -- 
    ([xshift=-4.7ex, yshift=-0ex]\tikztotarget.west) -- 
    (\tikztotarget)}]{dll}
  & X 
    \arrow[d, "\phi"] \\
\widehat{Z} 
  \arrow[r, "\hat{i}"] 
  \arrow[rrd] 
  & \widehat{X}_x 
    \arrow[r] 
  & X 
    \arrow[r, "\phi"] 
  & Y \\
& & Z 
  \arrow[u, "i"] 
\end{tikzcd}
\]
so it is enough to prove that the latter is commutative.

The left square and the bottom triangle clearly commute. To see that the right rectangle commutes, note that since $v_i\in \Ker(D\phi)$, the morphism $\xi \colon V\to \Gamma(X,\cT_X)$ is compatible with the zero morphism $0\colon V\to \Gamma(Y,\cT_Y)$. Hence, the commutativity of the rectangle follows from 
items \eqref{prop:exp.props:1} and \eqref{prop:exp.props:4} of \Cref{prop:exp.props}. The rest of the faces in the diagram commute by the definitions. Hence, the entire diagram commutes.
\end{proof}
\begin{lemma} \label{lem:transversal} Let $\phi :X \rightarrow Y$ be a strongly Thom stratified {morphism}, let $S \subseteq X$ be a stratum, let $x\in S$, and let $i:Z \rightarrow X$ be a transversal to $S$ at $x$. Then there is a Zariski neighborhood $U$ of $x$ such that $\phi \circ i|_{i ^{-1}(U)}$ is regular.
\end{lemma}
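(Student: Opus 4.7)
My plan is to combine the strong Thom condition with the formal-flow machinery of \Cref{lem:form.ODE} and Artin approximation (\Cref{cor:spred}) to produce a Nisnevich-local factorization of $\phi$ through $\phi\circ i$, whence the required regularity of $\phi\circ i$ follows by descent from the regularity of $\phi$.

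Following the strategy of \Cref{lem:form.ODE}, I would first construct a formal retraction $\psi\colon \widehat X_x \to Z$ such that $\widehat{\phi\circ i}\circ\psi = \widehat\phi$ and $D\psi_x$ is surjective. The inputs are $\cT_\phi$-valued vector fields, tangent to every stratum, obtained from the strong Thom hypothesis; they extend a basis of $T_x(S\cap\phi^{-1}(\phi(x)))$. One integrates these formally through $\exp_\xi$ (\Cref{prop:exp.props}). Because the transversality we are given is $T_xZ + T_xS = T_xX$ rather than the stronger $T_xZ \oplus T_xF = T_xX$ assumed in \Cref{lem:form.ODE} (writing $F := S\cap \phi^{-1}(\phi(x))$), I would also lift horizontal vector fields on $T = \phi(S)$ through the submersion $\phi|_S\colon S\to T$ and extend them to $X$ tangent to all strata. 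Enlarging the space $V$ of vector fields correspondingly, the restriction of $\exp_\xi$ to $\widehat{V\times Z}_{(0,x)}$ becomes a formal isomorphism onto $\widehat X_x$; inverting and projecting to $\widehat Z_x$ furnishes $\psi$.

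Next I would apply \Cref{cor:spred} to spread $\psi$ out to an algebraic map $\widetilde\psi\colon \widetilde X \to Z$ on a Nisnevich neighborhood $\pi\colon\widetilde X\to X$ of $x$ with $(\phi\circ i)\circ\widetilde\psi = \phi\circ\pi$. Surjectivity of $D\psi_x$ and the derivative-preservation built into Artin's theorem ensure $\widetilde\psi$ is a surjective submersion on a Zariski neighborhood of $x$. Consequently, on a Zariski open $U\subseteq Z$ containing $x$, the morphism $\phi\circ i|_U$ is realized as the descent of $\phi\circ \pi$ along the surjective submersion $\widetilde\psi|_{\widetilde\psi^{-1}(U)}$. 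The regular stratified structure of $\phi$ pulls back to $\phi\circ \pi$ through the étale $\pi$, and surjective submersions permit descent of this regular stratified-morphism structure (in the spirit of \Cref{prop:descent} for quasi-transitivity), so $\phi\circ i|_U$ is regular.

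The principal obstacle is the first step, namely bridging the gap between transversality of $Z$ to $S$ and transversality of $Z$ to the smaller subvariety $F$. When $T_xZ \cap T_xF \neq 0$, the naive $\exp_\xi$ of the strong-Thom fields fails to yield a formal isomorphism, so the supplementary horizontal vector fields on $S$ must be lifted to $X$ while preserving tangency to all strata. This does not follow directly from the strong Thom condition on $\cT_\phi$-fields alone; it requires exploiting the regularity of the stratification together with the submersion $\phi|_S$, which is itself part of the strong Thom hypothesis.
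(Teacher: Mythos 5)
Your approach is fundamentally different from the paper's, and it is far heavier than necessary. The paper's proof is a short tangent-space argument that requires no formal flows, no Artin approximation, and no descent. It goes as follows: extend a basis $v_1,\dots,v_k$ of $T_x(S\cap\phi^{-1}(\phi(x)))$ to sections $s_1,\dots,s_k$ of $\cT_\phi$ that are, at every point of a neighborhood, tangent to the stratum through that point (this is exactly what the strong Thom hypothesis supplies). After shrinking, the condition that $s_1(z),\dots,s_k(z)$ together with $T_zZ$ span $T_zX$ holds for all $z\in Z$ near $x$, since it is Zariski-open. For each such $z$, lying in a stratum $S_z$, the vectors $s_i(z)$ lie in $T_z(S_z\cap\phi^{-1}(\phi(z)))$, so $T_z(S_z\cap\phi^{-1}(\phi(z)))+T_zZ=T_zX$. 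This single equation simultaneously gives smoothness of $Z\cap S_z$ and (via the modular law intersected with $T_zS_z$) surjectivity of $D(\phi|_{Z\cap S_z})$, hence regularity. That is the whole proof.

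Beyond being overcomplicated, your proposal has a concrete gap at the step you yourself flag as the principal obstacle. You want to enlarge $V$ by lifting horizontal vector fields on $T=\phi(S)$ to $X$ tangent to all strata, so that $\exp_\xi$ restricted to $\widehat{V\times Z}_{(0,x)}$ becomes a formal isomorphism. But this is incompatible with the commutativity $\widehat{\phi\circ i}\circ\psi=\widehat{\phi}$ that you require of the retraction: the diagram in \Cref{prop:exp.props}(4) that underlies \Cref{lem:form.ODE} holds precisely because the vector fields lie in $\cT_\phi=\ker D\phi$ (so that $\xi_Y=0$). Flowing along a non-vertical field changes $\phi$, so $\phi\circ\mathrm{pr}_Z\circ\exp_\xi^{-1}$ will not agree with $\phi$. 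In addition, the strong Thom condition provides vector fields tangent to fibers \emph{and} strata; it does not provide \emph{horizontal} vector fields tangent to all strata, and there is no reason such fields should exist. So the patch cannot restore the construction of $\psi$.

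Finally, note that the tangency constraint you are worried about ($T_xZ\oplus T_xF=T_xX$ rather than $T_xZ+T_xS=T_xX$) is the one the paper's proof itself silently uses: the ``shrinking'' step requires $T_x(S\cap\phi^{-1}(\phi(x)))+T_xZ=T_xX$. Under that reading of the transversality hypothesis there is nothing to repair, and the elementary argument above applies verbatim; trying to accommodate the weaker reading is what pushes you into the broken horizontal-lifting step.
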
 
\begin{proof} 
{Let $\cT_\phi^\str$ be the subsheaf of $\cT_\phi$ spanned by the vector fields that are tangent to the strata of $X$.}
Let $v_1,\ldots,v_k$ be a basis of $T_x(S\cap \phi ^{-1} (\phi(x)))$. By the assumption, there is a neighborhood $U$ of $x$ and sections $s_1,\ldots,s_k\in \mathcal{T}_{\phi}^{\str}(U)$ such that $s_i(x)=v_i$. Shrinking $U$ if needed, we can assume that $s_1(z),\ldots,s_k(z),T_zZ$ span $T_zX$ for every $z\in Z\cap U$. 

Let $z\in Z\cap U$ and denote the stratum containing $z$ by $S_z$. Since $s_i(z)$ is tangent to the fibers and the strata, we get
\[
T_z (S_z \cap \phi^{-1}\phi(z)) + T_z Z = T_z X,
\]
proving regularity.
\end{proof} 

We turn to prove the main theorem of the section.
\begin{thm} \label{thm:structure.Thom}
Let $\phi \colon X\to Y$ be a strongly Thom stratified morphism {of smooth algebraic varieties over a field of characteristic $0$,} and let $S\subset X$ be a closed stratum such that $\dim(\phi|_S)> 0$. Then, there exists a commutative diagram 
\[
\xymatrix{
\widetilde{X}\ar^j[r]\ar_\pi[d] & X \ar^\phi[r] & Y \\
Z\ar_{\widetilde{\phi}}[rru] &       &               
}
\]
where
\begin{enumerate}
\item \label{cond:pi.subm} $\pi$ is a surjective submersion. 
\item \label{cond:j.Nis} $j$ is an \'{e}tale morphism and a Nisnevich cover of a Zariski neighborhood of $S$.   
\item \label{cond:dimZ} $\dim(Z)< \dim(X)$.
\end{enumerate}
Moreover, $\tilde{\phi}$ is quasi-transitive.
\end{thm}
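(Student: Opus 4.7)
The plan is to work Nisnevich-locally near each point $x\in S$, construct the factorization using the strong Thom condition and Artin approximation, and then patch. Before starting, observe that the quasi-transitivity of $\tilde\phi$ will be automatic once conditions (1)--(3) are established: by \Cref{thm:trans_iff_thom} the strongly Thom stratified morphism $\phi$ is quasi-transitive; since $j$ is \'etale (hence a submersion), \Cref{prop:descent}\,(1) applied to $\phi$ and $j$ yields that $\phi\circ j=\tilde\phi\circ \pi$ is quasi-transitive; then \Cref{prop:descent}\,(2) applied to the surjective submersion $\pi$ gives the quasi-transitivity of $\tilde\phi$.

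Next, I would fix a point $x\in S$ with $d:=\dim_x(S\cap \phi^{-1}(\phi(x)))>0$; such an $x$ exists since $\dim(\phi|_S)>0$ and, the stratification being regular, $\phi|_S$ is a submersion whose nonempty fibers are equidimensional on each component of $S$. I then choose a smooth locally closed subvariety $i_Z\colon Z_x\hookrightarrow X$ through $x$ with $T_x Z_x\oplus T_x(S\cap \phi^{-1}(\phi(x)))=T_xX$, so that $\dim Z_x=\dim X-d<\dim X$. Now \Cref{lem:form.ODE} applies and produces a formal morphism $\hat\psi\colon \widehat{X}_x\to Z_x$ with $\phi\circ \hat\jmath_x=(\phi\circ i_Z)\circ \hat\psi$ and with $D\hat\psi$ surjective at $x$, where $\hat\jmath_x\colon \widehat{X}_x\to X$ is the canonical map. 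The pair $(\hat\jmath_x,\hat\psi)$ is then a formal section over $\widehat{X}_x$ of the projection $\mathrm{pr}_1\colon X\times_Y Z_x\to X$.

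The final step is to spread this out and patch. Applying \Cref{cor:spred} with $n=2$ to $\mathrm{pr}_1$ and to the formal section above produces an \'etale morphism $\tilde X_x\to X$ that is a Nisnevich neighborhood of $x$ together with an algebraic section agreeing with $(\hat\jmath_x,\hat\psi)$ to first order. Projecting to $Z_x$ yields $\pi_x\colon \tilde X_x\to Z_x$ with $(\phi\circ i_Z)\circ \pi_x$ equal to the composite $\tilde X_x\to X\xrightarrow{\phi}Y$; and since the $1$-jet of $\pi_x$ matches that of $\hat\psi$, $D\pi_x$ is surjective at the point over $x$. Thus $\pi_x$ is a submersion in a Zariski neighborhood of that point; shrinking $\tilde X_x$ and replacing $Z_x$ by the open image of $\pi_x$ makes $\pi_x$ a surjective submersion with $\dim Z_x<\dim X$. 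Finally, since $S$ is Noetherian, finitely many such local constructions patch into a single diagram of the required form via disjoint unions. The main obstacle is \Cref{lem:form.ODE}: producing, from the strong Thom condition, a family of vector fields spanning $T_x(S\cap \phi^{-1}(\phi(x)))$ that are simultaneously tangent to the fibers of $\phi$, and integrating them in the formal-scheme setting via $\exp_\xi$. Once this lemma is granted, the passage from formal to Nisnevich via Artin approximation and the subsequent patching are routine.
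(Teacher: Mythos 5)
Your proposal is correct and follows the paper's proof essentially verbatim: reduce to the local statement at a point $x\in S$, choose a transversal $Z_x$, apply \Cref{lem:form.ODE} to produce a formal section of $X\times_Y Z_x\to X$, spread out via \Cref{cor:spred}, shrink to make $\pi_x$ a surjective submersion, and deduce quasi-transitivity of $\tilde\phi$ from \Cref{prop:descent}. The only (immaterial) deviation is that you explicitly invoke the spread-out with $n=2$ so that first-order jets agree and $D\pi_x$ is visibly surjective at the lift of $x$, which is in fact the precision one needs for the submersivity step.
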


\begin{proof}
Since every Nisnevich cover of an algebraic variety has a finite subcover, it is enough to prove that for every $x\in S$ there is a commutative diagram of algebraic varieties
\begin{equation} \label{eq:diagram_local}
\xymatrix{
\widetilde{X}_x\ar^{j_x}[r]\ar_{\pi_x}[d] & X \ar^\phi[r] & Y \\
Z_x\ar_{\widetilde{\phi}_x}[rru]                     &                 &           
}
\end{equation}
satisfying conditions \eqref{cond:pi.subm},\eqref{cond:dimZ}, and
\begin{enumerate} 
\item[(\ref{cond:j.Nis}')] $j_x$ is an \'{e}tale morphism and a Nisnevich cover of a Zariski neighborhood of $x$.\\
\end{enumerate}

Choose a locally closed embedding $i'_x\colon Z'_x\into X$ of a smooth subvariety such that
\begin{itemize}
    \item $x\in S\cap Z'_x$ 
    \item  $T_x(S\cap\phi^{-1}(\phi(x)))\oplus T_xZ=T_xX.$  
\end{itemize}

By \Cref{lem:form.ODE}, there is a 
commutative diagram 
\[
\xymatrix{
\widehat{X}_x\ar^{}[r]\ar_{\pi'_x}[d] & X \ar^\phi[r] & Y \\
Z'_x\ar^{i'_x}[rr]                     &                 & X\ar^{\phi}[u]                
}
\]
in which $\widetilde{X}_x \rightarrow X$ is the inclusion of the completion of $X$ at $x$ and $\pi_x'$ induces a surjection on the tangent spaces at $x$. 
This diagram gives a map $\hat{s}:\widehat{X}_x\to  X\times_Y Z'_x$.
Consider the projections:
$$p_X:X\times_Y Z'_x\to X \quad \text{and} \quad p_{Z_x'}\colon X\times_Y Z'_x\to Z_x'$$

Applying \Cref{cor:spred} to 
\[
\xymatrix{
                               & & X\times_Y Z'_x\ar^{p_X}[d] \\
\widehat{X}_x\ar^{\hat{s}}[rru]\ar[rr]&   & X 
},
\]
there is an \'{e}tale map $j_x'\colon  U\to X$ which is a Nisnevich neighborhood of $x$ and a map $s^U\colon U\to X\times_Y Z'_x$ that fits into a commutative diagram:
\[
\xymatrix{
 \widehat{X}_{x}\ar^{\hat{s}}[rr]                                 &    & X\times_Y Z'_x\ar^{p_X}[d] \\
X_{x,1}\ar^{\hat{s}|_{X_{x,1}}}[rru]\ar@{..>}[r]\ar[u] \ar@/^-1.5pc/[rr] & U\ar@{..>}_{s^U}[ru]\ar_{j'_x}[r]      & X 
}.
\]
From this, we obtain the following commutative diagram 
\[
\xymatrix{
U\ar^{j'_x}[r]\ar_{\pi_x':=p_{Z_x'} \circ s^U}[d] & X \ar^\phi[r] & Y \\
Z'_x\ar^{i'_x}[rr]                     &                 & X\ar^{\phi}[u]                
}
\]

Finally, we now show that there are open 
subsets $\widetilde{X}_x \subseteq U$ and 
$Z_x\subseteq Z_x'$ such that 
$\pi_x'(\widetilde{X}_x)\subseteq Z_x$ and 
such that the outer diagram in the 
following commutative diagram satisfies 
conditions \eqref{cond:pi.subm}, (\ref{cond:j.Nis}'), and \eqref{cond:dimZ}: 

\[
\begin{tikzcd}[column sep=large, row sep=large]
\widetilde{X}_x \arrow[hookrightarrow]{r} 
  \arrow["j_x", bend left=30]{rr} 
  \arrow["\pi_x"']{d} 
  & U \arrow["j'_x"]{r} 
    \arrow["\pi'_x"']{d} 
    & X \arrow["\phi"]{r} 
      & Y \\
Z_x \arrow[hookrightarrow]{r} 
  \arrow[rounded corners, to path={ -- ([yshift=-3ex]\tikztostart.south) -- 
    node[below]{$\widetilde{\phi}_x$}
    ([xshift=44ex, yshift=-3ex]\tikztostart.south) -- 
    ([xshift=44ex, yshift=12ex]\tikztostart.south) -- 
    (\tikztotarget)}]{rrru}
  & Z'_x \arrow["i'_x"]{rr} 
  && X \arrow["\phi"]{u}
\end{tikzcd}
\]

Let $u\in U$ be the image of $x$ under the map $X_{x,1} \rightarrow U$. Since $\pi_x'\colon \widehat{X}_x \to Z_x'$ induces a surjection on tangent spaces, the composition $X_{x,1} \to U \to Z_x'$ induces a surjection on the tangent 
spaces at $x$, so $\pi_x'$ is a submersion at $u$. Let $\widetilde{X}_x$ be a Zariski neighborhood of $u$ for which the restriction 
$\pi_x'\restriction_{\widetilde{X}_x}$ 
is a submersion and let 
$Z_x:=\pi_x'(\widetilde{X}_x)$. 
{It is easy to check that the resulting diagram \eqref{eq:diagram_local} 
satisfies conditions \eqref{cond:pi.subm}, (\ref{cond:j.Nis}'), and 
\eqref{cond:dimZ} as required.}

Finally, to show that $\tilde{\phi}$ is quasi-transitive, note that by \Cref{prop:descent}(1) the morphism $\phi\circ j = \tilde{\phi}\circ \pi$ is quasi-transitive, and hence by \Cref{prop:descent}(2) the morphism $\tilde{\phi}$ is quasi-transitive. 

\end{proof}

\section{Direct images of smooth measures}
{Finally, we will use the geometric results from the previous section to prove our main result,  \Cref{thm:intro.main}. We start by recalling the standard notions related to measures on the $F$-points of smooth varieties.}

\begin{definition} Let $X$ be a 
smooth variety over a
{$p$-adic field $F$.}
Denote the ring of integers of $F$ by $O$. \begin{enumerate}
\item A (complex-valued) measure $\mu$ on $X(F)$ is called smooth if for every $x\in X(F)$ there is a neighborhood $x \in U \subseteq X(F)$ and a{n analytic}\footnote{{See e.g. \cite{Ser64} for the notion of $F$-analytic manifolds and maps between them.}} diffeomorphism $f:U \rightarrow O^d$ such that $f_* \mu$ is an additive Haar measure on $O^d$.
\item We denote the complex vector space of all compactly supported measures on $X(F)$ by $\mathcal{M}_c(X(F))$ and the complex vector space of compactly supported and smooth measures on $X(F)$ by $\mathcal{M}^{\infty}_c(X(F))$. We consider both vector spaces as modules over the {(non-unital)} ring $C_c^\infty(X(F))$ of locally constant and compactly supported functions on $X(F)$.
\item For a morphism $\phi:X \rightarrow Y$, the pushforward {of measures gives}
a linear map $\phi_*: \mathcal{M}_c^\infty(X(F)) \rightarrow \mathcal{M}_c(Y(F))$.
{
\item For $x\in X(F)$, define the stalk $C^\infty_c(X(F))_x$ at $x$ to be the following $C^\infty_c(X(F))$-module: it is the one-dimensional vector space $\mathbb{C}$ on which $f\in C^\infty_c(X(F))$ acts by multiplication by $f(x)$.
\item For a $C^\infty_c(X(F))$-module $M$ and $x\in X(F)$, the space of stalk of $M$ at $x$ is 
\[
M_x:=M \otimes_{C^\infty_c(X(F))} C^\infty_c(X(F))_x.
\]
}
{In other words, $M_x$ is the quotient of $M$ by the submodule consisting of elements $m$ such that $1_U\cdot m = 0$ for some open compact neighborhood $U$ of $x$.}
\end{enumerate} 
\end{definition} 
{
We are ready to state and prove our main theorem.
}
\begin{thm}\label{thm:main} If $\phi:X\to Y$ is a quasi-transitive morphism of smooth algebraic varieties defined over a 
{$p$-adic field $F$}
then the dimensions of stalks of $\phi_* \left( \mathcal{M}_c \left( X(F) \right) \right)$ are uniformly bounded: there is an integer $N(\phi)$ such that 
\[
\dim \left(\phi_*(\mathcal{M}_c(X(F)))\right)_y<N(\phi),
\]
for every $y\in Y(F)$.
\end{thm}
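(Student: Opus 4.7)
The plan is to induct on the pair $(\dim X, N)$ in lexicographic order, where $N$ is the number of strata in a strong Thom stratification of $\phi$ (which exists by \Cref{thm:trans_iff_thom}). The base case $\dim X = 0$ is immediate since $X(F)$ is finite. For the inductive step, I fix such a stratification and pick a closed stratum $S \subseteq X$ (one whose image in the indexing poset is minimal, so that $S$ is closed in $X$). Two cases arise depending on whether $\dim(\phi|_S)$ is zero or positive.

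In Case A, $\dim(\phi|_S) = 0$ and $\phi|_S$ is quasi-finite. Set $U := X \setminus S$; the restricted strong Thom stratification on $U$ has $N-1$ strata, so by the inductive hypothesis the stalks of $(\phi|_U)_*\mathcal{M}_c^\infty(U(F))$ are uniformly bounded. Extension by zero gives a $C_c^\infty(Y(F))$-module inclusion $(\phi|_U)_*\mathcal{M}_c^\infty(U(F)) \hookrightarrow \phi_*\mathcal{M}_c^\infty(X(F))$, whose cokernel is a quotient of $\phi_* Q$ for $Q := \mathcal{M}_c^\infty(X(F))/\mathcal{M}_c^\infty(U(F))$. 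As a sheaf on $X(F)$, $Q$ is supported on $S(F)$ with one-dimensional stalks — the germ of a locally Haar measure at a $p$-adic point is determined by its density value there — so the stalk of $\phi_*Q$ at $y \in Y(F)$ is bounded by $|\phi|_S^{-1}(y) \cap S(F)| \le \deg(\phi|_S)$, uniformly in $y$. Combined with the inductive bound, this gives a uniform bound on the stalks of $\phi_*\mathcal{M}_c^\infty(X(F))$.

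In Case B, $\dim(\phi|_S) > 0$ for every closed stratum $S$. Apply \Cref{thm:structure.Thom} to such an $S$ to obtain a diagram $\tilde X \xrightarrow{j} V \hookrightarrow X$ and a factorization $\phi \circ j = \tilde\phi \circ \pi$, where $V$ is a Zariski neighborhood of $S$, $j$ is an étale Nisnevich cover, $\pi \colon \tilde X \to Z$ is a surjective submersion, $\dim Z < \dim X$, and $\tilde\phi$ is quasi-transitive. By induction on dimension, the stalks of $\tilde\phi_*\mathcal{M}_c^\infty(Z(F))$ are uniformly bounded. Using partition-of-unity arguments and local sections, both the surjective submersion $\pi$ and the étale Nisnevich cover $j$ induce surjections on smooth compactly supported measures, which identifies $(\phi|_V)_*\mathcal{M}_c^\infty(V(F)) = \tilde\phi_*\mathcal{M}_c^\infty(Z(F))$, so its stalks are uniformly bounded. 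To extend this to $\phi_*\mathcal{M}_c^\infty(X(F))$, I apply the Case A exact sequence to the open/closed decomposition $X = V \sqcup (X \setminus V)$: the closed complement $X \setminus V$ has strictly fewer closed strata (since $S \subseteq V$ has been removed), and the inductive hypothesis applied to $\phi|_{X \setminus V}$ controls the remaining contribution.

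The hardest step will be the patching in Case B, since the Zariski neighborhood $V$ produced by the structure theorem need not be all of $X$; this forces combining the local reduction at $S$ with a further appeal to the induction for the complement, relying on the lexicographic ordering of $(\dim X, N)$ to keep the induction well-founded. A parallel technical subtlety, present in both cases, is verifying that the pushforward of measures interacts properly with the relevant exact sequences of $C^\infty_c$-modules, and that the density-sheaf interpretation of $Q$ on $S(F)$ yields the claimed stalk bound.
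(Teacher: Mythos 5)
Your overall strategy mirrors the paper's: the same double induction on $(\dim X, N)$, the same dichotomy according to whether $\dim(\phi|_S)=0$ or $\dim(\phi|_S)>0$ for a chosen closed stratum $S$, and the same use of \Cref{thm:structure.Thom} together with the facts that a surjective submersion preserves smoothness of pushed-forward measures and an \'etale Nisnevich cover surjects on smooth compactly supported measures. Case~A is correct in substance, merely repackaged: where the paper fixes explicit reference measures $\eta_1,\dots,\eta_m$ supported near the finitely many points of $\phi^{-1}(y)\cap S(F)$ and decomposes every $\omega$ as $\eta+\theta+\zeta$, you instead bound the cokernel of the extension-by-zero inclusion $(\phi|_U)_*\mathcal{M}_c^\infty(U(F))\hookrightarrow\phi_*\mathcal{M}_c^\infty(X(F))$. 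The two packagings are equivalent, but note that the crucial step in yours — that the stalk at $y$ of the quotient $Q=\mathcal{M}_c^\infty(X(F))/\mathcal{M}_c^\infty(U(F))$, viewed as a $C_c^\infty(Y(F))$-module, has dimension at most $|\phi^{-1}(y)\cap S(F)|$ — still requires the compactness of $\supp(\mu)\cap S(F)$ together with quasi-finiteness of $\phi|_S$ to show that for $V$ small enough the germ only sees neighborhoods of the finitely many fiber points; this is exactly what the paper's choice of $\eta_i$'s encodes. Also, since $\phi|_S$ is quasi-finite but not necessarily finite flat, the bound is the constructibility bound $n(\phi,S)$ on fiber cardinalities, not a ``degree.''

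There is, however, a genuine gap in your Case~B patching. You propose to decompose along $X=V\sqcup(X\setminus V)$ and then ``apply the inductive hypothesis to $\phi|_{X\setminus V}$.'' This does not work: $X\setminus V$ is a closed subvariety of $X$, in general singular, and it carries no given strong Thom stratification, so the theorem's inductive hypothesis simply does not apply to $\phi|_{X\setminus V}$. Nor can you run the Case~A cokernel argument on this decomposition, since that argument crucially used $\dim(\phi|_S)=0$ to bound the cokernel stalk, and $\dim(\phi|_{X\setminus V})$ has no such bound. The correct complement is $X\setminus S$, not $X\setminus V$: because $S\subseteq V$, the open sets $V$ and $X\setminus S$ cover $X$, hence
$\phi_*\mathcal{M}_c^\infty(X(F))=\phi_*\mathcal{M}_c^\infty(V(F))+\phi_*\mathcal{M}_c^\infty((X\setminus S)(F))\subseteq \tilde\phi_*\mathcal{M}_c^\infty(Z(F))+\phi_*\mathcal{M}_c^\infty((X\setminus S)(F))$.
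Now $X\setminus S$ is open in $X$, hence smooth, and inherits a strong Thom stratification with $N-1$ strata, so the lexicographic induction applies directly to $\phi|_{X\setminus S}$, giving $N(\phi)=N(\tilde\phi)+N(\phi|_{X\setminus S})$. Once you replace $X\setminus V$ by $X\setminus S$, the ``hardest step'' you flagged disappears and the argument closes.
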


\begin{proof} For every quasi-transitive map $\phi':X' \rightarrow Y'$, let 
\[
T(\phi'):=\inf \left\{ |P| \mid \text{$(X' \rightarrow P,Y' \rightarrow Q)$ is a Thom stratification of $\phi'$}\right\}. 
\]

By Theorem \ref{thm:trans_iff_thom}, $T(\phi)<\infty$. We prove the claim by induction on the pair $\left( \dim(X),T(\phi)\right)$ in the lexicographic order.

Given $\phi:X \rightarrow Y$, choose a Thom stratification 
\[
\xymatrix{X \ar[r] \ar[d] & P \ar[d] \\ Y \ar[r] & Q}
\]
with $|P|=T(\phi)$ and choose a closed stratum $S \subseteq X$. We consider the following cases:

\begin{enumerate}[{Case} 1]
\item $\dim \phi \restriction_S =0$:\\

{In this case,} there is a number $n=n(\phi,S)$ such that $|\phi ^{-1} (y)\cap S| \leq n$, for all $y\in Y(F)$. Given $y\in Y(F)$, let $\phi ^{-1} (y)\cap S(F)=\left\{ x_1,\ldots,x_m \right\}$ and fix smooth measures $\eta_1,\ldots,\eta_m\in \mathcal{M}_c(X(F))$ such that \begin{enumerate}
\item $\eta_i$ does not vanish at $x_i$.
\item The supports of $\eta_i$ are pairwise disjoint.\\
\end{enumerate}
Now, 
\[
X(F)=(X(F) \smallsetminus S(F)) \cup (X(F) \smallsetminus \phi ^{-1}(y)) \cup (\bigcup_i \supp(\eta_i)
)\]
is an open cover {of $X(F)$}. {Consequently,} every {measure} $\omega\in \mathcal{M}_c^\infty(X(F))$ can be written as $\omega=\eta+\theta+\zeta$, where \begin{enumerate}
\item $\eta$ is a linear combination of $\eta_1,\ldots,\eta_m$.
\item $\theta\in \mathcal{M}_c^\infty \left( (X \smallsetminus S)(F) \right)$.
\item $\zeta\in \mathcal{M}_c^\infty(X(F))$ and $\supp(\zeta) \cap \phi ^{-1} (y)=\emptyset$.
\end{enumerate}

It follows that
\[
\phi_*(\omega)_y=\phi_*(\eta)_y+\phi_*(\theta)_y,
\]
so
\[
\Big( \phi_* \left( \mathcal{M}_c^\infty(X(F)) \right)\Big)_y \subseteq \linspan \left\{ \phi_*(\eta_1)_y ,\ldots,\phi_*(\eta_m)_y \right\} + \Big( \phi_* \left( \mathcal{M}_c^\infty \left( (X \smallsetminus S)(F) \right) \right) \Big)_y.
\]
Hence, the claim holds with $N(\phi)=n(\phi,S)+N(\phi \restriction_{X \smallsetminus S})$.

\item $\dim \phi \restriction_S >0$:\\

By Theorem \ref{thm:structure.Thom}, there is a commutative diagram
\[
\xymatrix{
\widetilde{X}\ar^j[r]\ar_\pi[d] & X \ar^\phi[r] & Y \\
Z\ar_{\widetilde{\phi}}[rru] &       &               
}
\]
where
\begin{enumerate}
\item $\pi$ is a surjective submersion. 
\item $j$ is an \'{e}tale morphism and a Nisnevich cover of a Zariski neighborhood $U$ of $S$.   
\item $\dim(Z)< \dim(X)$.
\item $\widetilde{\phi}$ is quasi-transitive.
\end{enumerate}

Since $\pi$ is a submersion, $\pi_*(\mathcal{M}_c^\infty (\widetilde{X}(F))) \subseteq \mathcal{M}_c^\infty (Z(F))$. Since $j$ is \'{e}tale and a Nisnevich cover of $U$, $j_*\mathcal{M}_c^\infty (\widetilde{X}(F)) = \mathcal{M}_c^\infty(U(F))$. Therefore,
\[
\phi_*(\mathcal{M}_c^\infty(U(F))) = (\phi \circ j)_* \mathcal{M}_c^\infty(\widetilde{X}(F))=(\widetilde{\phi} \circ \pi)_* \mathcal{M}_c^\infty(\widetilde{X}(F)) \subseteq \widetilde{\phi}_* \mathcal{M}_c^\infty(Z(F)),
\]
so
\begin{align*}
\phi_*\left( \mathcal{M}_c^\infty(X(F))\right) &= \phi_*(\mathcal{M}_c^\infty(U(F)))+\phi_*(\mathcal{M}_c^\infty((X \smallsetminus S)(F))) \\ &\subseteq 
\widetilde{\phi}_* \mathcal{M}_c^\infty(Z(F)) + \phi_*(\mathcal{M}_c^\infty((X \smallsetminus S)(F))),
\end{align*}
and the claim holds with $N(\phi)=N(\widetilde{\phi})+N(\phi \restriction_{X \smallsetminus S})$.

\end{enumerate} 
\end{proof}

\section{Examples}

The next two examples show that the criterion of  \Cref{thm:intro.main} is not necessary.

\begin{example} \label{exam:1dimage} Suppose that $\phi: \mathbb{A} ^n \rightarrow \mathbb{A} ^1$. 

One can deduce from the theory of $p$-adic integration (see e.g. \cite{CH18})
that the $C_c^\infty(F)$-module $\phi_* \left( \mathcal{M}_c(F^n) \right)$ has finite-dimensional stalks.
\end{example} 

\begin{example} \label{exam:4lines} We give an example of a map $\phi:\mathbb{A}^3 \rightarrow \mathbb{A} ^1$ that is not quasi-transitive but for which $\phi_* \left( \mathcal{M}_c(\Q_p^3) \right)$ has finite-dimensional stalks. 

Consider the map $\phi:\mathbb{A}^3 \rightarrow \mathbb{A} ^1$ given by $\phi(x,y,z)=xy(x+y)(x+yz)$. 
 By Example \ref{exam:1dimage}, $\phi_*$ has finite-dimensional stalks.

 In order to show that $\phi$ is not quasi-transitive, we may replace the base field by $\C$.
 
We claim that, for every $z_0\neq 0,1$, there is no vertically extendible nonzero tangent vector in $T_{(0,0,z_0)}\mathbb{A} ^3$. Indeed, assume that $U$ is a small ball around $(0,0,z_0)$ in $\mathbb{C} ^3$ and $\xi \in \mathcal{T}_\phi (U)$ with $\xi(0,0,z_0)\neq0$. 
\begin{enumerate}
\item If $U$ is small enough, then the only singular points of $\phi$ in $U$ are points of the form $(0,0,t)$. Thus, $\xi(0,0,z_0)$ is a multiple of $(0,0,1)$.
\item After normalizing $\xi$ so that its third coordinate is 1, the flow of $\xi$ gives rise to a diffeomorphism {$\Phi$} between two neighborhoods of the origin in $\mathbb{C}^2$ that carries the plane curve $\{xy(x+y)(x+z_0y)=0\}$ to the plane curve $\{xy(x+y)(x+z_0'y)=0\}$ for some $z_0'\neq z_0$. Looking at the derivative of {$\Phi$}, it is clear that such a diffeomorphism cannot exist.
\end{enumerate} 
It follows that $\dim 
\Pi_{\mathfrak{B}_\phi} ^{-1}(0,0,z_0)
=3$, so $(\phi \circ \Pi_{\mathfrak{B}_\phi}) ^{-1} (0)$ is 4-dimensional, and $\phi$ is not quasi-transitive. On the other hand, by Example \ref{exam:1dimage}, $\phi_*$ has finite-dimensional stalks.
\end{example}  
{
\begin{remark}\label{rem:def}
 A direct computation shows that the map $(x,y,z)\mapsto x^2y(x+y)$ is quasi-transitive. 
 
 This means that we have a 1-parameter family of morphisms $\phi_\alpha:\A^3\to\A^1$, given by $\phi_\alpha(x,y,z)=xy(x+y)(x+\alpha yz)$, such that for all $\alpha\neq 0$ the morphism $\phi_\alpha$ is not quasi-transitive near the origin in the source, but for $\alpha=0$ the morphism $\phi_\alpha$ is quasi-transitive.
\end{remark}
}
The following is an example of a map $\phi : X \rightarrow Y$ for which $\phi_*(\mathcal{M}_c(X(F)))$ has infinite-dimensional stalks.

\begin{example} \label{exam:blowup} 
{Let $F$ be a $p$-adic field.}
Let $\pi : \Bl_0 \mathbb{A} ^2 \rightarrow \mathbb{A} ^2$ be the blowup of the plane at the origin and let $E = \pi ^{-1} (0)$ be the exceptional divisor. $E$ is identified with the projective line $\mathbb{P} ^1$. Let $d$ be a metric on $\mathbb{P}^1(F)$ that is compatible with the topology.

Let $p\in E{(F)}$ and let $\epsilon >0$. If $U \subseteq \Bl_0 \mathbb{A} ^2{(F)}$ is a small enough neighborhood of $p$, then $\pi(U)$ is contained in the sector
\[
\left\{ (x,y)\in \mathbb{A}^2{(F)} \smallsetminus 0 \mid d([x:y],p)< \epsilon \right\} \cup \left\{ 0 \right\}.
\]
By shrinking $\epsilon$ we get that the space of germs at the origin of supports of measures in $\phi_*(\mathcal{M}_c(\Bl_0{\mathbb{A}}^2(F)))$, i.e. the direct limit
\[
\colim_{U} \left\{ \supp(\mu)\cap U \mid  \mu \in \phi_*(\mathcal{M}_c^\infty(\Bl_0(F))) \right\} 
\]
is infinite. In particular, $\phi_*(\mathcal{M}_c(\Bl_0{\mathbb{A}^2}(F)))$ has an infinite-dimensional stalk at the origin.
\end{example} 

More generally,
\begin{proposition}\label{prop:flat} Let $X,Y$ be smooth varieties over 
a {$p$-adic field $F$} 
and let $\phi :X \rightarrow Y$ be a dominant morphism. Assume that there is a point $y\in Y(F)$ such that $\dim \phi ^{-1} (y)(F)>\dim X -\dim Y$.\footnote{Here we can take, for example, the notion of dimension of semi-algebraic sets as in \cite[\S 3]{SvdD}} Then the space of germs of supports of pushforwards
\[
\colim_{y\in U} \left\{ \supp(\mu)\cap U \mid \mu \in \phi_*(\mathcal{M}_c^\infty(X(F))) \right\} 
\]
is infinite.
\end{proposition}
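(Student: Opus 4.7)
The plan is to exhibit infinitely many distinct germs at $y$ of sets of the form $\phi(V)$, where $V$ ranges over open analytic subsets of $X(F)$ meeting $Z(F) := \phi^{-1}(y)(F)$. Any such germ is realized as the germ at $y$ of $\supp(\phi_*\mu)$ for a suitable nonnegative $\mu \in \mathcal{M}_c^\infty(X(F))$ with support a compact neighborhood inside $V$, so producing infinitely many distinct image germs suffices to establish the proposition.

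For each $x \in Z(F)$, I would define the \emph{image germ} $g(x)$ to be the germ at $y$ of $\phi(V)$ as $V$ shrinks to $\{x\}$ through small analytic neighborhoods; by the semi-algebraic structure of $\phi$ and $p$-adic cell decomposition, this germ stabilizes to a well-defined $F$-semi-algebraic germ, giving a map $g \colon Z(F) \to \{\text{germs at } y\}$. The core estimate to prove is that every fiber $g^{-1}(G)$ is an $F$-semi-algebraic subset of $Z(F)$ of dimension at most $d := \dim X - \dim Y$ (in the sense of \cite[\S 3]{SvdD}); combined with the hypothesis $\dim Z(F) > d$ and the additivity of definable dimension, this would force the image of $g$ to be infinite.

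The intuition for the fiber bound is that two points $x, x' \in Z(F)$ with $g(x) = g(x') = G$ must be linked through shared preimages of regular values: taking a dense open $W \subseteq Y$ on which $\phi$ is smooth with $d$-dimensional fibers (available since $\operatorname{char} F = 0$), for every $y' \in G \cap W(F)$ close to $y$ and every small neighborhoods $V \ni x$, $V' \ni x'$ we have $y' \in \phi(V) \cap \phi(V')$, so $\phi^{-1}(y')(F)$ accumulates onto both $x$ and $x'$ as $y' \to y$. Rigorizing that this accumulation locus has dimension at most $d$ could be done by passing to a flattening of $\phi$ (e.g.\ a Raynaud--Gruson or Hilbert-scheme modification) with equidimensional fibers of dimension $d$, and transferring the bound via a comparison of image germs.

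The main obstacle is that $g(x)$ carries higher-order analytic information beyond the differential of $\phi$: in the toy example $\phi(x,y) = (x, x^2 y)$, the tangent cones along the fiber $\{x = 0\}$ all coincide with the $u$-axis, yet the image germs at the origin are distinct cusps $v = y_0 u^2$ parametrized by $y_0$. First-order data are therefore insufficient, and the dimension bound on $g^{-1}(G)$ must be obtained from the full semi-algebraic germ --- either via the flattening approach above, or through a direct cell-decomposition argument on the closure of the graph of $\phi$ restricted to $\phi^{-1}(W)(F)$, which should provide the needed refinement.
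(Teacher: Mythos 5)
Your central device is a map $g\colon Z(F)\to\{\text{germs at }y\}$ sending $x$ to ``the germ at $y$ of $\phi(V)$ as $V$ shrinks to $\{x\}$,'' together with the claim that this germ stabilizes. This is false in general, and the paper's own Example~\ref{exam:blowup} is a counterexample: for the blowup $\pi\colon\Bl_0\mathbb{A}^2\to\mathbb{A}^2$ and a point $p$ on the exceptional divisor, the image $\pi(V)$ of a shrinking neighborhood $V$ of $p$ is a shrinking ``sector'' around the direction $p$ in $\mathbb{A}^2(F)$, and these sectors have pairwise distinct, strictly decreasing germs at the origin --- they never stabilize. (Semi-algebraicity of $\phi$ and cell decomposition give you finiteness of germs \emph{for a fixed $V$}, not stabilization across nested $V$'s.) So $g$ is not a well-defined map and the dimension count on its fibers $g^{-1}(G)$ has no object to apply to. The example you flag as an ``obstacle'' (higher-order data beyond $D\phi$) is real but secondary; the prior well-definedness problem already kills the plan.

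The paper's argument does share your guiding intuition that a dimension count should force infinitely many germs, but it implements it differently and in a way that avoids the stabilization issue. Instead of attaching a germ to each source point, it works with curves $C\subset Y$ through $y$ over which $\phi$ is flat away from $y$, and for each such $C$ considers the accumulation locus
\[
\mathscr{Z}_C \;=\; \overline{\phi^{-1}(C\smallsetminus\{y\})(F)}^{\mathrm{an}}\cap \phi^{-1}(y)(F)
\]
inside the special fiber. Using curve-selection in the source (Lemma~\ref{lem:curv.sel}) one shows these $\mathscr{Z}_C$ cover $\phi^{-1}(y)(F)$, while flatness forces each $Z_C$ to have strictly smaller dimension than $\phi^{-1}(y)$; hence by the hypothesis $\dim\phi^{-1}(y)(F)>\dim X-\dim Y$ no finite collection of $\mathscr{Z}_C$ covers, and one can iteratively choose $x_i\in\mathscr{Z}_{C_i}\smallsetminus\bigcup_{j<i}\mathscr{Z}_{C_j}$. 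The germ of $\phi(V)$ at $y$ for a small $V\ni x_i$ then meets $C_i\smallsetminus\{y\}$ (since $x_i\in\mathscr{Z}_{C_i}$) but not $C_j\smallsetminus\{y\}$ for $j<i$ (since $x_i\notin\mathscr{Z}_{C_j}$), so the $n$ germs so obtained are pairwise distinct. Crucially this only requires choosing, for each $i$, \emph{one} small enough $V$; it never needs the germ to stabilize as $V$ shrinks. If you want to keep the spirit of your proposal, replacing ``image germ of $x$'' by ``which curves $C$ have $x\in\mathscr{Z}_C$'' is the fix.
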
 
For the proof we will need the following standard lemma.
\begin{lem}\label{lem:curv.sel}
    Let $X$ be a smooth algebraic variety defined over an infinite field $F$. Let $U\subset X$ be an open dense subset. Let $x\in X(F)\setminus U(F)$. Then there exists a smooth curve $S\subset X$ such that $x\in S(F)$ and $S\setminus x\subset U$.        
\end{lem}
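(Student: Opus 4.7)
The plan is to reduce the statement to a linear-algebra problem on affine space via an étale chart, and then use that $F$ is infinite to choose a generic line.

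First, I replace $X$ by a smooth affine Zariski-open neighborhood $V$ of $x$ of pure dimension $n:=\dim_xX$, and set $Z:=V\setminus U$, a closed subset of $V$ containing $x$ with $\dim Z<n$ (since $U$ is open and dense). It suffices to build a smooth $F$-curve $S\subset V$ with $x\in S(F)$ and $S\cap Z\subseteq\{x\}$. Using smoothness at $x$, I pick a regular system of parameters $t_1,\dots,t_n\in\mathcal{O}_{X,x}$; after possibly shrinking $V$, these assemble into an étale morphism $f\colon V\to\mathbb{A}^n$ with $f(x)=0$. Let $Z':=\overline{f(Z)}\subset\mathbb{A}^n$; since $f$ is étale (hence quasi-finite), $\dim Z'=\dim Z<n$, and $0\in Z'$.

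The central step is to choose an $F$-rational line $L\subset\mathbb{A}^n$ through $0$ with $L\not\subset Z'$. The subset
\[
\Lambda:=\{L\in\mathbb{P}^{n-1}:L\subseteq Z'\}
\]
is Zariski-closed in $\mathbb{P}^{n-1}$: if $\{g_\alpha\}$ are polynomials cutting out $Z'$ and $g_\alpha=\sum_d g_{\alpha,d}$ is the homogeneous decomposition, then $\Lambda$ is the zero set of the equations $g_{\alpha,d}=0$ (these are homogeneous, so well-defined on $\mathbb{P}^{n-1}$). Moreover $\Lambda\ne\mathbb{P}^{n-1}$, because otherwise the union of all lines through $0$ would lie in $Z'$, forcing $\dim Z'=n$. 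Since $F$ is infinite, $\mathbb{P}^{n-1}(F)$ is Zariski-dense in $\mathbb{P}^{n-1}$, so there is an $F$-rational line $L\not\subset Z'$.

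For this $L$, the intersection $L\cap Z'$ is a proper closed subset of $L\cong\mathbb{A}^1$, hence finite. The preimage $f^{-1}(L)\to L$ is étale, so $f^{-1}(L)$ is smooth over $F$ of pure dimension $1$. Let $C$ be the irreducible component of $f^{-1}(L)$ containing $x$; as irreducible components of an $F$-scheme of finite type are defined over $F$ and components of a smooth scheme are smooth, $C$ is a smooth $F$-curve through $x$. Because $f$ is quasi-finite, $C\cap Z\subseteq f^{-1}(L\cap Z')$ is finite; removing from $C$ the finite set $(C\cap Z)\setminus\{x\}$ yields an open subcurve $S\subseteq C$ which is a smooth $F$-curve with $x\in S(F)$ and $S\cap Z=\{x\}$, i.e.\ $S\setminus\{x\}\subset U$.

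The main (minor) obstacle is verifying the closedness and properness of $\Lambda\subset\mathbb{P}^{n-1}$, which is what lets the infinitude of $F$ deliver a generic line; everything else is a routine use of the étale local model and the quasi-finiteness of étale maps.
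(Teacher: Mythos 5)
Your proof is correct and follows essentially the same route as the paper's: pass to an étale chart $f\colon V\to\mathbb{A}^n$, pick an $F$-rational line $L$ through $f(x)$ not contained in the image of the complement of $U$, and take the preimage of $L$ minus the finitely many unwanted points. The only difference is that you supply the details the paper leaves implicit — notably the argument that the lines contained in $Z'$ form a proper closed subset of $\mathbb{P}^{n-1}$, so a suitable $L$ exists because $F$ is infinite.
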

\begin{proof}
    Replacing $X$ by a neighborhood of $x$ we can assume that we have an \'{e}tale map $\phi:X\to \bA^n$. Let $L\subset \A^n$ be an $F$-line passing through $\phi(x)$ such that $L \not \subseteq \phi(X\smallsetminus U)$. 

    Now take $S:=\phi^{-1}(L)\smallsetminus(Z\smallsetminus x)$.
\end{proof}
\begin{proof}[Proof of \Cref{prop:flat}]
We call an algebraic curve $C\subset Y$ nice if $\phi$ is flat over $C \smallsetminus \{y\}$. Let 
$$ Z_C:=\overline{\phi^{-1}(C\smallsetminus \{x\})}\cap \phi^{-1}(x)$$ 
and 
$$  \aZ_C:=\overline{\phi^{-1}(C\smallsetminus \{x\})(F)}^{\mathrm{an}}\cap \phi^{-1}(x)(F).$$ 
where $\overline{(-)}^{\mathrm{an}}$ denotes the closure in the analytic topology.
The locus $Y^{reg}$ of the regular values of $\phi$ is Zariski dense in $Y$. Thus,  
since $\phi$ is locally dominant, the locus
$X^{reg}:=\phi^{-1}(Y^{reg})$ is dense in $X$.

Let $x\in \phi^{-1}(y)(F)$. By \Cref{lem:curv.sel}
we have a smooth irreducible curve $S$ passing through $x$ such that $S\smallsetminus \{x\}\subset X^{reg}$.
Thus $C:=\phi(S)$ is good, and $x\in \mathscr Z_C$.

We obtain that the collection
\[
\left\{ \mathscr Z_C \mid \text{ $C$ is a nice curve passing through $x$} \right\}
\]
covers $\phi^{-1}(y)(F)$.

Each of the schemes $Z_C$ is of smaller dimension than $\phi^{-1}(y)$. Therefore, each of the analytic varieties $\aZ_C$ is of smaller dimension than $\phi^{-1}(y)$, so any finite union of analytic varieties of the type $\aZ_C$ will not cover $\phi^{-1}(y)(F)$. It follows that, for every $n$, there are nice curves $\bfC_1,\ldots,\mathbf{C}_n$ that pass through $x$ such that 
\[
\aZ_{{C}_i} \not\subset \bigcup_{j < i} \aZ_{{C}_j}.
\]
Choosing $x_i\in \aZ_{C_i}\smallsetminus\left(\bigcup_{j< i} Z_{\aC_j}\right)$, the images of small enough neighborhoods of $x_i$ have distinct germs, as required.
\end{proof}
\begin{cor}
Let $X,Y$ be connected smooth varieties over a 
{$p$-adic field $F$}
and let $\phi :X \rightarrow Y$ be a non-flat dominant morphism. 

Then there is a finite extension $E/F$ such that 
$\phi_*(\mathcal{M}_c(X(E)))$ has infinite-dimensional stalks.
\end{cor}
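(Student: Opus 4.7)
The plan is to reduce the corollary to \Cref{prop:flat} by producing, after a suitable finite base change $E/F$, an $E$-point $y$ of $Y$ whose fiber satisfies $\dim \phi^{-1}(y)(E) > \dim X - \dim Y$. The key observation is that non-flatness is equivalent, in this smooth setting, to an excessive fiber, and then the only remaining work is to find enough rational points on such a fiber over a large enough $p$-adic field.

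First I would invoke the miracle flatness theorem. Since $X$ is smooth (hence Cohen--Macaulay) and $Y$ is smooth (hence regular), a morphism $\phi: X\to Y$ is flat if and only if all its fibers have the expected dimension $\dim X - \dim Y$. Combined with upper semi-continuity of fiber dimension and dominance of $\phi$ (which forces every fiber to have dimension at least $\dim X - \dim Y$), non-flatness of $\phi$ is equivalent to the non-emptiness of the closed subvariety
\[
Y_{\mathrm{bad}} = \{y \in Y : \dim \phi^{-1}(y) > \dim X - \dim Y\} \subseteq Y.
\]
Choose a closed point $y_0 \in Y_{\mathrm{bad}}$ and let $E_1 := k(y_0)$, a finite extension of $F$. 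Then $\phi^{-1}(y_0)$ is a variety over $E_1$ of some dimension $d > \dim X - \dim Y$.

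Next, I would pass to a further finite extension $E/E_1$ over which the scheme-theoretic fiber $\phi^{-1}(y_0)$ acquires enough rational points to realize its full dimension analytically. Since the characteristic is zero, the smooth locus of $\phi^{-1}(y_0)$ is a non-empty open subvariety, still of dimension $d$; pick a closed point $x_0$ in this smooth locus and set $E := k(x_0)$. Because $x_0$ is an $E$-point lying in the smooth part of the fiber, a neighborhood of $x_0$ in $\phi^{-1}(y_0)(E)$ is an $E$-analytic manifold of dimension $d$, so $\dim \phi^{-1}(y_0)(E) \geq d > \dim X - \dim Y$.

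Finally, I would apply \Cref{prop:flat} to the base-changed morphism $\phi_E : X_E \to Y_E$; this remains a dominant morphism between smooth $E$-varieties (possibly no longer connected, but connectedness is not needed in \Cref{prop:flat}), and the point $y_0 \in Y_E(E)$ has a fiber of excessive analytic dimension. The proposition then produces infinitely many distinct germs of supports of measures in $\phi_*(\mathcal{M}_c^\infty(X(E)))$ at $y_0$, which, exactly as in the concluding observation of \Cref{exam:blowup}, forces the stalk of $\phi_*(\mathcal{M}_c(X(E)))$ at $y_0$ to be infinite-dimensional. The only conceptual step is the reduction via miracle flatness in the first paragraph; the remaining steps are routine manipulations with rational points of smooth $p$-adic varieties.
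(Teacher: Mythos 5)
Your proof follows essentially the same route as the paper: reduce to \Cref{prop:flat} by producing, after a finite extension, a rational point whose fiber has excessive analytic dimension. The paper's own proof is extremely terse and leaves implicit exactly the two steps you spell out — that non-flatness of a dominant morphism of connected smooth varieties forces a fiber of dimension $>\dim X-\dim Y$ (miracle flatness plus Chevalley's lower bound on fiber dimension), and that one obtains enough rational points by passing to the residue field of a smooth closed point of that fiber. So your write-up is a correct expansion of the paper's argument, not a genuinely different one. One small point worth tightening: the scheme-theoretic fiber $\phi^{-1}(y_0)$ can be everywhere non-reduced, in which case its smooth locus (over $E_1$) is empty; you should replace it by its reduction, which has the same dimension and the same $E$-points, and in characteristic zero a reduced variety does have dense smooth locus, so your choice of $x_0$ goes through. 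With that caveat the argument is complete.
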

\begin{proof}
Choose an extension $E/F$ such that there is $y\in Y(E)$ satisfying  $\dim(\phi^{-1}(y))= \dim\phi^{-1}(y)(E)$. Now the statement follows from \Cref{prop:flat}.
\end{proof}

However, the converse does not always hold. Specifically,
the following gives an example of a flat map $\psi:X\to Y$ for which 
$\psi_*\mathcal{M}_c(X(F))$ has infinite-dimensional stalks.
\begin{prop}
    Consider the map $\phi:\A^2\to \A^2$ given by $\phi(x,y)=(x,xy)$ and define $\psi:\A^4\to \A^2$ by $\psi(x,y,z,w)=\phi(x,y)+\phi(z,w)$.
    Then $\psi$ is flat and $\psi_*(\mathcal{M}_c(\Q_p^4))$ has an infinite-dimensional stalk at $0$.
\end{prop}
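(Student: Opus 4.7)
The plan is to treat flatness and the infinite-dimensionality of the stalk at $0$ as two independent tasks. For flatness I would use miracle flatness: the fiber of $\psi$ over $(p, q) \in \A^2$ is the subscheme $\{x+z = p,\ xy + zw = q\}$ of $\A^4$, and a direct analysis shows every fiber is equidimensional of dimension $2 = \dim X - \dim Y$ (for $p \ne 0$, the substitution $z = p - x$ presents it as a smooth surface; for $(0, q)$ with $q \ne 0$ it is $\{x(y-w) = q,\ z = -x\}$; over the origin, it is the union of the two $2$-planes $\{x = z = 0\}$ and $\{x+z=0,\ y=w\}$ meeting along the critical line $\{x=z=0,\ y=w\}$). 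Since $\A^4$ is Cohen--Macaulay and $\A^2$ is regular, miracle flatness yields flatness of $\psi$.

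For the stalk at $0$ I would first simplify via the linear source change of variables $\sigma = x+z$, $\tau = x-z$, $\alpha = (y+w)/2$, $\beta = (y-w)/2$, bringing $\psi$ into the form $(\sigma, \tau, \alpha, \beta) \mapsto (\sigma,\ \sigma\alpha + \tau\beta)$, with critical locus the line $\{\sigma = \tau = \beta = 0\}$ parametrized by $\alpha$. Fix a large integer $N$, set $\delta = p^{-N}$, and for each $c \in \Z_p^\times$ let $\mu_c$ be the Haar indicator measure of the box $B_c = \{|\sigma|, |\tau|, |\alpha - c|, |\beta| \le \delta\}$, concentrated near the critical point $(0, 0, c, 0)$. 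Using the fiber chart $\beta = (v - \sigma\alpha)/\tau$, valid on $\{\tau \ne 0\}$ (a set of full source measure) with Jacobian $|\tau|$, I would express the density of $\psi_*\mu_c$ on $\A^2$ as the explicit $p$-adic integral
\[
\rho_c(\sigma, v) = \mathbf{1}_{|\sigma| \le \delta}\!\int\!\!\int_{|\tau| \le \delta,\ |\alpha - c| \le \delta,\ |v - \sigma\alpha| \le \delta|\tau|} \frac{d\tau\, d\alpha}{|\tau|}.
\]

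Splitting the $\tau$-integral by valuation should give the asymptotic description that $\rho_c$ is supported in the strip $\{|v - c\sigma| \le \delta^2\}$ and, along any ridge $v = c_j \sigma$ (with $|c - c_j| > \delta$) as $|\sigma| \to 0$, behaves like $(1 - p^{-1})\delta\bigl( -\log_p|\sigma| + \text{constant}_{c, c_j}\bigr)$, where the constant depends sensitively on whether $c = c_j$ (involving $(p-1)^{-1}$) or not (involving $-\log_p|c - c_j|$). To extract linear independence of $\{\psi_* \mu_c\}$ in the stalk, I would choose $c_i = 1 + p^i$ for $i = 1, \ldots, n$ (so that $|c_i - c_j| = p^{-\min(i, j)}$ are pairwise distinct and exceed $\delta$ once $N > n$), assume $\sum_i a_i \rho_{c_i} \equiv 0$ in a neighborhood of $0$, match the coefficient of $\log_p|\sigma|$ on any ridge to obtain $\sum a_i = 0$, and match the constant terms on each of the $n$ ridges. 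After simplifying with $\sum a_i = 0$, this gives a system $\tilde K a = 0$ where $\tilde K = (N + (p-1)^{-1}) I + M_0$ with $M_0$ a bounded matrix whose off-diagonal entries are $\min(i, j)$; for $N$ sufficiently large, $\tilde K$ is diagonally dominant and hence invertible, forcing $a = 0$, and since $n$ is arbitrary the stalk is infinite-dimensional. The main obstacle is the precise $p$-adic asymptotic analysis of $\rho_c$ along the ridges; once the leading $\log_p|\sigma|$ term and the constant term are computed, the invertibility of $\tilde K$ for $N$ large is routine.
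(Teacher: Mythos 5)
Your proof is correct but takes a genuinely different route from the paper. The paper exploits the convolution structure of $\psi$: writing $\mu_n := \phi_*(\lambda_2 \cdot 1_{p^n\Z_p^2})$, it notes $\psi_*(\lambda_4 \cdot 1_{p^n\Z_p^4}) = \mu_n*\mu_n$, computes the Fourier transform $\widehat{\mu_n}(a,b)$ in closed form, and reads off linear independence of the germs $(\mu_n*\mu_n)_0$ from the observation that the supports of $\widehat{(\mu_n*\mu_n)\cdot 1_{p^N\Z_p^2}}=\widehat{\mu_n}^2 * 1_{p^{-N}\Z_p^2}$ are pairwise distinct (strictly nested) as $n$ varies. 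Your argument works on the physical side: after the linear change of variables bringing $\psi$ to $(\sigma,\tau,\alpha,\beta)\mapsto(\sigma,\sigma\alpha+\tau\beta)$ (which tacitly assumes $p\neq 2$ — harmless here), you localize test measures near distinct points $(0,0,c_i,0)$ of the critical line, compute the pushforward densities via the fiber chart $\beta=(v-\sigma\alpha)/\tau$, and extract linear independence from the constants accompanying the $-\log_p|\sigma|$ divergence along the rays $v=c_j\sigma$. I checked the key asymptotics and they come out as you predict: along $v=c_j\sigma$ one gets $\rho_{c_i}(\sigma,c_j\sigma)=\delta(1-p^{-1})(-\log_p|\sigma|+\mathrm{const}_{ij})$ with $\mathrm{const}_{ii}=-N+1+\tfrac{1}{p-1}$ and $\mathrm{const}_{ij}=\min(i,j)-2N+1$ for $i\neq j$, so that after using $\sum a_i=0$ the system is $\tilde K a=0$ with $\tilde K_{ii}=N+\tfrac{1}{p-1}$ and $\tilde K_{ij}=\min(i,j)$, which is diagonally dominant once $N\gtrsim n^2$. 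Note that $N$ must be chosen after $n$, but since one only needs $n$ independent germs for each $n$, this order of quantifiers is fine. The paper's Fourier argument is slicker and avoids any asymptotic estimates, while your density computation makes geometrically visible that the infinite-dimensionality is witnessed by the continuous family of critical points on the line $\{\sigma=\tau=\beta=0\}$. You also supply a miracle-flatness proof of the flatness claim, which the paper's proof omits entirely (it only addresses the stalk).
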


\begin{proof}
  Let $\lambda_k$ be the normalized Haar measure on $\mathbb{Z}_p^k$ and let $\mu_n=\psi_*\left( \lambda_2 \cdot 1_{p^n \mathbb{Z}_p^2} \right)$. Note that 
$$\psi_*\left( \lambda_4 \cdot 1_{p^n \mathbb{Z}_p^4} \right)=\mu_n*\mu_n,$$ where $*$ denotes the convolution.

Fix an additive character $\chi$ of $\Q_p$ which is trivial on $\Z_p$ and non-trivial on $p^{-1}\Z_p$.
The Fourier transform of $\mu_n$ (w.r.t. the character $\chi$) is given by
\begin{align*}
\widehat{\mu_n}(a,b)
&= \int_{p^n \mathbb{Z}_p^2} \chi(ax + bxy)\, dx\, dy 
= \int_{p^n \mathbb{Z}_p} \chi(ax) 
   \left( \int_{p^n \mathbb{Z}_p} \chi(bxy)\, dy \right) dx \\
&= p^{-n} \int_{p^n \mathbb{Z}_p} \chi(ax)\,
   1_{\{x : |bx| \le p^{-n}\}}(x)\, dx 
= p^{-n} \int_{p^n \mathbb{Z}_p \cap p^n b \mathbb{Z}_p} \chi(ax)\, dx \\
&=
\begin{cases}
    0, & \text{if } |a| > p^n \max(|b|,1),\\[6pt]
    \dfrac{p^{-2n}}{\max(|b|,1)}, & \text{if } |a| \le p^n \max(|b|,1).
\end{cases}
\end{align*}

Therefore, for any $N\in\N$,
$$\supp(\widehat{(\mu_n*\mu_n)\cdot 1_{p^N\Z_p^2}})=
\supp(\widehat{\mu_n}^2 * 1_{p^{-N}\Z_p^2})=p^{-N}\Z_p^2 \cup \{(a,b)\in\Q_p^2\mid \valu(a) \ge n+ \min(\valu(b),0)\}.
$$
Therefore, for any $N\in\N$, the sequence $n\mapsto (\mu_n*\mu_n)\cdot 1_{p^N\Z_p^2}$ is linearly independent. Thus the sequence of stalks $(\mu_n*\mu_n)_0\in \psi_*(\mathcal{M}_c(\Q_p^4))_0$ is linearly independent.
\end{proof}
\begin{remark}
In view of \Cref{thm:intro.main} this implies that $\psi$ is not quasi-transitive.

We can also show this directly. The differential of $\psi$ is
\[
D\psi_{x,y,z,w}=(dx+dz,ydx+xdy+wdz+zdw).
\]
Thus $\Ker(D\psi)$ is the subsheaf of $\cT_{\A^4}$ generated by the vector fields 
\begin{align*}
  v_1:=&-x\partial_x+(y-w)\partial_y+x\partial_z,\\
  v_2:=&-z\partial_x+z\partial_z+(y-w)\partial_w,\\ 
  v_3:=&-z\partial_y+x\partial_w.  
\end{align*}
Each $v_i$ defines a regular function on $T^*\A^4$.
The variety $\fB_\psi$ is the zero-locus of these functions.
{Note that all the $v_i$-s vanish when $x=z=0$ and $y=w$}. Hence, 
\[
\{(x,y,z,w,\alpha dx+\beta dy+\gamma dz+\delta dw)\mid x=z=0,\ y=w\}\subset (\psi \circ \Pi_{\mathfrak{B}_\psi}) ^{-1} (0,0)
\]
and $\dim (\psi \circ \Pi_{\mathfrak{B}_\psi}) ^{-1} (0,0)>4$, {so that $\psi$ is not quasi-transitive.}

\end{remark}

{
\begin{remark}\label{rem:con}
More generally, the morphism $\psi_n:\A^{2n}\to \A^2$ given by $$\psi(x_1,y_1,\dots, x_n,y_n)=\phi(x_1,y_1)+\cdots \phi(x_n,y_n)$$ is neither {quasi-transitive} nor \FS.

$\psi_n$ is the self-convolution (in the sense of \cite[Definition 1.1]{GH2}) of $n$ copies of $\phi$. So, morphisms might not become {quasi-transitive} or \FS\ after any number of self-convolutions. This is 
unlike other properties of morphisms (see e.g. the main results of \cite{GH2}).

Note also that $\psi_n$ is a quadratic polynomial morphism whose strength (in the sense of \cite[Definition 1.4.]{Erm19}) tends to infinity. The failure of $\varphi_n$ to be quasi-transitive or $\mathcal{M}$-finite is in contrast to the Ananyan–Hochster principle described in \cite[\S 1.II]{Erm18}.
\end{remark}
}    

\bibliographystyle{amsalpha}

\bibliography{push}

\end{document}